\theoremstyle{plain}
\newtheorem{theorem}{Theorem}
\newtheorem{corollary}[theorem]{Corollary}
\theoremstyle{definition}
\newtheorem*{remark}{Remark}
\newcommand{\N}{\mathbb{N}}
\newcommand{\Z}{\mathbb{Z}}
\newcommand{\Q}{\mathbb{Q}}
\newcommand{\R}{\mathbb{R}}
\newcommand{\C}{\mathbb{C}}
\newcommand{\K}{\mathbb{K}}
\newcommand{\ff}[2]{f^{\np{#1}}_{\np{#2}}}
\newcommand{\numer}{\operatorname{numer}}
\newcommand{\denom}{\operatorname{denom}}
\newcommand{\sgn}{\operatorname{sgn}}
\renewcommand{\Re}{\operatorname{Re}}
\renewcommand{\Im}{\operatorname{Im}}
\title{Three essays on Machin's type formulas%
\thanks{\textit{Indagationes Mathematicae}, to appear.}}
\author{Armengol Gasull\textsuperscript{1}, 
   Florian Luca\textsuperscript{2} 
   and Juan L. Varona\textsuperscript{3}
   \\[6pt]
   \small\textsuperscript{1}Dep. de Matem\`{a}tiques,
   Universitat Aut\`{o}noma de Barcelona and Centre de Recerca Matem\`{a}tica,\\[-2pt]
   \small Cerdanyola del Vall\`{e}s (Barcelona), Spain. 
   Email: {Armengol.Gasull@uab.cat}\\[6pt]
   \small\textsuperscript{2}School of Maths, Wits University, Johannesburg, South Africa,\\[-2pt]
   \small Centro de Ciencias Matem\'aticas, UNAM, Morelia, Mexico. 
   Email: {florian.luca@wits.ac.za}\\[6pt]
   \small\textsuperscript{3}Departamento de Matem\'aticas y Computaci\'on, 
   Universidad de La Rioja,\\[-2pt]
   \small Logro\~no, Spain. 
   Email: {jvarona@unirioja.es}}
\date{}
\begin{document}

\maketitle

\begin{abstract}
We study three questions related to Machin's type formulas. The first one gives all two terms Machin formulas where both arctangent functions are evaluated $2$-integers, that is values of the form $b/2^a$ for some integers $a$ and~$b$. These formulas are computationally useful because multiplication or division by a power of two is a very fast operation for most computers. The second one presents a method for finding infinitely many formulas with $N$ terms. In the particular case $N=2$ the method is quite useful. It recovers most known formulas, gives some new ones, and allows to prove, in an easy way, that there are two terms Machin formulas with Lehmer measure as small as desired. Finally, we correct an oversight from previous result and give all Machin's type formulas with two terms involving arctangents of powers of the golden section.
 
\medskip
 
\noindent
\textbf{Keywords:} 
Machin's type formulas, Lehmer measure, computation of $\pi$, golden section.
 
\medskip
 
\noindent
\textbf{Mathematics Subject Classification:} 
Primary 11Y60; Secondary 11D45, 33B10.
\end{abstract}

\section{Introduction}

In 1706, John Machin found the identity
\begin{equation}
\label{eq:Machin-intro}
  4 \arctan\frac{1}{5} - \arctan\frac{1}{239} = \frac{\pi}{4}.
\end{equation}
In conjunction with the $\arctan$ expansion
\begin{equation}
\label{eq:Gregory}
  \arctan x = \sum_{m=0}^{\infty} \frac{(-1)^m}{2m+1} x^{2m+1},
  \qquad |x| < 1,
\end{equation}
discovered by Gregory in 1671, Machin used \eqref{eq:Machin-intro} to compute 100 digits of~$\pi$.

In the mathematical literature there are many formulas similar to~\eqref{eq:Machin-intro}, that is, combinations of $\arctan$ functions that, in some way, generate~$\pi$. 
Besides \eqref{eq:Machin-intro}, the following are the most classical formulas 
\begin{align}
\label{eq:Euler}
& \arctan(1/2) + \arctan(1/3) = \pi/4,
\\
\label{eq:Hermann}
& 2\arctan(1/2) - \arctan(1/7) = \pi/4,
\\
\label{eq:Hutton}
& 2\arctan(1/3) + \arctan(1/7) = \pi/4,
\end{align}
that are usually known as Euler's, Hermann's and Hutton's formulas, respectively
(actually, their attribution to these authors is not clear; for instance, \cite{Cal2} also attributes all of them to Machin, see \cite{Tw} for more historical details).

While many of these formulas have been used to effectively compute many digits of $\pi$, other formulas do not have such practical interest, but they are interesting by themselves. For instance, this is the case with the relation
\begin{equation}
\label{eq:Fib}
\arctan \frac{F_{n}}{F_{n+1}} + \arctan \frac{F_{n-1}}{F_{n+2}} = \frac{\pi}{4},
\end{equation}
where $(F_n)_n$ are the Fibonacci numbers (a simple geometric proof of this formula can be found in~\cite{Plaza}). 
Moreover, taking into account that when $n \to \infty$, $F_n/F_{n-1} \to \phi := (1+\sqrt{5})/2$, the golden section, taking limits in \eqref{eq:Fib} gives the identity
\begin{equation}
\label{eq:phi}
\arctan \phi^{-1} + \arctan \phi^{-3} = \frac{\pi}{4}.
\end{equation}

Many questions can be posed around this subject. A first natural one was: How many formulas of the type
\begin{equation}
\label{eq:1/mk}
  x_1 \arctan\frac{1}{m_1} + x_2 \arctan\frac{1}{m_2} = \frac{\pi}{4},
\end{equation}
with rationals $x_k$ and integers $m_k \ge 2$ there exist? 
Nowadays, after 1895 St\"ormer's paper \cite{StormerOld} (see also \cite{Stormer}) it is known that 
only the four above identities \eqref{eq:Machin-intro}, \eqref{eq:Euler}, \eqref{eq:Hermann} and \eqref{eq:Hutton} do exist. 

How about if we allow identities of the type
\begin{equation}
\label{eq:Machin-like}
  x_1 \arctan\frac{a_1}{b_1} + \dots + x_N \arctan\frac{a_N}{b_N} = \frac{\pi}{4},
\end{equation}
with $x_k \in \Q$, $a_k \in \Z$, $b_k \in \N^*$ (and $|a_k/b_k| < 1$ to guarantee the convergence of \eqref{eq:Gregory} with $x=a_k/b_k$), are there many other such formulas?
Which of them gives a faster algorithm to compute digits of~$\pi$? 
In 1938, D. H. Lehmer \cite{Le} gave the now so-called Lehmer measure 
\begin{equation}
\label{eq:Lehmer-orig}
  \sum_{k=1}^N \frac{1}{\log_{10}(|b_k/a_k|)},
\end{equation}
that can be used as a hint of the computational efficiency of~\eqref{eq:Machin-like};
without explaining the details that motivate the definition, note that, if $|a_k/b_k|$ is small, the series \eqref{eq:Gregory} for $\arctan(a_k/b_k)$ converges quickly, and less summands are necessary to compute it with a prescribed precision. Thus, the smaller is the Lehmer measure, the faster is the corresponding algorithm to compute digits of~$\pi$. 
Many formulas of type \eqref{eq:Machin-like} with their corresponding Lehmer measures can be found in \cite{ChLi, Le, We}. For instance, the Lehmer measure of~\eqref{eq:Machin-intro} is $1.85113$ and thus it is faster than~\eqref{eq:Euler}, \eqref{eq:Hermann} and~\eqref{eq:Hutton}, whose Lehmer measures are, respectively, $5.41783$, $4.50522$ and $3.2792$; 
moreover, both \cite{ChLi} and \cite{We} give the same identity of type \eqref{eq:Machin-like}, with $N=6$, and whose Lehmer measure is $1.51244$, the lowest at that time. Are there Machin-like formulas with Lehmer measure as small as we want?

Nowadays, the use of this type of formulas to compute many digits of $\pi$ is not so useful, because faster types of algorithms are available (for instance, Chudnovsky algorithm \cite{Chu}, which is based on Ramanujan's $\pi$ formulas; for more details on these types of algorithms see~\cite{Gu}). Actually, more than $10^{15}$ decimal digits of $\pi$ are already known. Moreover, to compute more digits of $\pi$ does not have any practical interest, but the one of beating records.

In relation to \eqref{eq:phi}, a different question can be asked: are there similar formulas with other powers of~$\phi$?

The aim of this paper is to answer some of the above questions. In Section~\ref{sec:powers2}, we analyze the solutions of an equation similar to \eqref{eq:1/mk} but allowing $\arctan(2^{a_k}/m_k)$ or $\arctan(m_k/2^{a_k})$ in the place of $\arctan(1/m_k)$. We prove that there are ten sporadic Machin-type formulas of this type, together with two parametric families, see Theorem~\ref{theo:power2}.

Let us comment why the interest of having $2^{a_k}/m_k$ or $m_k/2^{a_k}$ instead of $a_k/b_k$ in general (we assume here that $a_k,b_k,m_k$ are positive integers). Let us assume that we want to compute many summands in \eqref{eq:Gregory}, to get many digits of~$\pi$. If we have $x=1/m_k$, every summand requires to divide by $2m+1$ and by $m_k^2$ (two operations); if we have $x=a_k/b_k$, a division by $2m+1$ and by $b_k^2$ and a multiplication by $a_k^2$ (three operations). 
Due to this reason, most of the Machin-like formulas to compute $\pi$ that have been used in the practice (or whose Lehmer measure have been analyzed in the above mentioned papers~\cite{ChLi,Le, We}) are of the form $1/m_k$.
But, if we have $2^{a_k}/m_k$ or $m_k/2^{a_k}$, to multiply or to divide by $2^{a_k}$ can be done with a \textit{shift} in the binary representation of the number, whose computational time is negligible compared with a multiplication or a division, so this case can be considered as fast as the case with $1/m_k$.
Thus, perhaps a better way to estimate the computational efficiency of a formula like~\eqref{eq:Machin-like} would be to take
\begin{equation*}
\sum_{k=1}^N \frac{w_k}{\log_{10}(|b_k/a_k|)}
\end{equation*}
with some ``weights'' $w_k \ge 1$ that may depend on $a_k$, and~$b_k$ (as well as on the hardware and the software), so it not totally clear how to compare such formulas.

In Section~\ref{sec:R_j} we define some rational functions $R_j(n,x)$ (both the numerator and the denominator being polynomials in the variable $x$ depending on $n$ and with integer coefficients), $j=0,1,2,3$ and $n \in \N$, in such a way that, for any $x \in \Q$, the combinations
\[
  x_1 \arctan(R_{j_1}(n_1,x)) + \cdots + x_N \arctan(R_{j_N}(n_N,x)),
\]
with $x_k=r_k/n_k$ and $r_1 + \cdots + r_N = 0$, always give a rational multiple of $\pi$ (we ignore the poles, namely the values of $x$ that are roots of any denominator). We have used the name ``Machin's formulas machine'' to denominate this method, because it allows finding Machin's type formulas without any difficulty. In particular, taking $N=2$, it allows us to find Machin's type formula with Lehmer measure as small as we want, see Theorem~\ref{theo:mLeps}. As we will comment at the end of Subsection~\ref{subsec:ss} our Machin's type formulas when $N=2$ extend some of the results of~\cite{ABCM}.

Finally, in Section~\ref{sec:phi}, we classify the formulas of the type
\[
  x_1 \arctan(\phi^{a_1}) + x_2 \arctan(\phi^{a_2}) = \frac{\pi}{4},
\]
with $a_k \in \Z \setminus \{0\}$ and $x_k \in \Q \setminus \{0\}$, showing that there are, essentially, sixteen of these identities. In fact, this part is a correction of the previous paper~\cite{LuSt} where some of these formulas were missed due to an oversight in the proof.

\section{Machin's formulas with powers of two}
\label{sec:powers2}

The purpose of this section is to solve
\begin{equation}
\label{eq:0}
x_1 \arctan(z_1) + x_2 \arctan(z_2) = \frac{\pi}{4}
\end{equation}
in rational numbers $x_1,x_2,z_1,z_2$, where $z_k\in (0,1)$ for $k=1,2$ and $z_k = 2^{a_k}/b_k$ or $b_k/2^{a_k}$ for some integers $a_k, b_k\ge 1$. The case where $a_k\le 0$ for both $k=1,2$ leads to $z_k=1/m_k$ 
for $k=1,2$, and this has been treated \cite{Stormer}. We do not treat the case when $z_1=z_2=z$, since that leads to $\arctan(z)/\pi\in \Q \setminus\{0\}$, and the only corresponding value of $z$ is~$1$. So, we assume that $z_1<z_2$. 
In case $a_k\le 0$, we incorporate $2^{-a_k}$ into $b_k$. Hence, we assume that $a_k\ge 0$ and $b_k$ is odd unless $a_k=0$ in which case $b_k$ can be even. 

As we will see, a main tool in the proof of next theorem will be that all positive integer solutions $(x,y,a,n)$, $n\ge3$, of the diophantine equations
$x^2+1 = 2y^n$ and $x^2+2^a = y^n$, are known, see \cite{Cohn, Lu}.

\begin{theorem}
\label{theo:power2}
All solutions $(x_1,z_1,x_2,z_2)$ of equation \eqref{eq:0} in non-zero rational numbers $x_1,x_2$ and rational numbers $z_1<z_2$ in $(0,1)$ of the form $2^{a_k}/b_k$ or $b_k/2^{a_k}$ for $k=1,2$ are the following ten sporadic ones
\begin{align*}
& \left(-1, \frac{1}{239}, 4, \frac{1}{5}\right), \ 
\left(-1, \frac{1}{7}, 2, \frac{1}{2}\right), \ 
\left(-1, \frac{2}{11}, \frac{3}{2}, \frac{3}{4}\right), \\
& \left(-1, \frac{2}{11}, 3, \frac{1}{3}\right), \ 
\left(\frac{1}{3}, \frac{1}{239}, \frac{4}{3}, \frac{2}{3}\right), \ 
\left(\frac{1}{2}, \frac{2}{11}, \frac{3}{2}, \frac{1}{2}\right), \\ 
& \left(1, \frac{1}{41}, 2, \frac{2}{5}\right), \ 
\left(1, \frac{1}{7}, 2, \frac{1}{3}\right), \ 
\left(1, \frac{1}{2}, \frac{1}{2}, \frac{3}{4}\right), \ 
\left(3, \frac{1}{7}, 2, \frac{2}{11}\right),
\end{align*}
together with the two parametric families
\[
\left(1,\frac{1}{2^{a_2+1}+1},1,\frac{2^{a_2}}{2^{a_2}+1}\right), \quad 
\left(1,\frac{1}{2^{a_2+1}-1},1,\frac{2^{a_2}-1}{2^{a_2}}\right),
\qquad a_2\in \N^*.
\]
\end{theorem}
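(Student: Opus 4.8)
The plan is to convert the transcendental relation \eqref{eq:0} into a multiplicative relation between Gaussian integers and then to exploit unique factorization in $\Z[i]$. To each term I would attach the primitive Gaussian integer $\beta_k$ whose real and imaginary parts are the denominator and numerator of $z_k$, so that $\arctan(z_k)=\arg(\beta_k)$ and the norm is $N(\beta_k)=b_k^2+4^{a_k}$ when a genuine power of two occurs, or $b_k^2+1$ when $z_k=1/b_k$. The analytic input is the classical fact, underlying St\"ormer's theorem, that the arguments $\theta_p=\arg(\pi_p)$ of the Gaussian primes $\pi_p$ lying over the rational primes $p\equiv 1\pmod 4$, together with $\pi$ itself, are linearly independent over $\Q$; this follows from unique factorization in $\Z[i]$. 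Expanding each $\arg(\beta_k)$ in the basis $\{\theta_p\}\cup\{\pi\}$ and clearing a common denominator $q$ of $x_1,x_2$, equation \eqref{eq:0} splits into one equation $x_1c_1+x_2c_2=1/4$ collecting the rational multiples of $\pi$ (the contributions of the units and of the prime $1+i$, whose argument is itself $\pi/4$), and, for every prime $p\equiv 1\pmod 4$, an equation $x_1n_{1,p}+x_2n_{2,p}=0$ among the signed exponents $n_{k,p}=\pm v_p(N(\beta_k))$.

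The prime equations are where the structure is forced. Since $x_1,x_2\neq 0$, a prime $p$ divides the odd part of $N(\beta_1)$ if and only if it divides the odd part of $N(\beta_2)$, and the two exponent vectors are proportional with $|n_{1,p}|/|n_{2,p}|=|x_2/x_1|$ for every $p$. Writing this fixed ratio as $s/t$ in lowest terms I would deduce that the odd parts of $N(\beta_1)$ and $N(\beta_2)$ are perfect powers $M^s$ and $M^t$ of one common integer $M$ with $\gcd(s,t)=1$, the powers of two and the units being controlled by the remaining $\pi$-equation. Translating back through $N(\beta_k)=b_k^2+4^{a_k}$ and $b_k^2+1$, the problem reduces to simultaneous Diophantine equations of the exact shape $x^2+2^c=y^n$ (with $c=2a_k$, present when $z_k$ carries a power of two) and $x^2+1=2y^n$ (when $z_k=1/b_k$ with $b_k$ odd, so that $1+i\mid\beta_k$). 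When the larger exponent $n=\max(s,t)\geq 3$ I would invoke the complete solutions from \cite{Cohn, Lu} to reduce to finitely many triples $(M,b_k,a_k)$; for each candidate I would reconstruct $\beta_1,\beta_2$, check that one coordinate really is a power of two and that both $\beta_k$ are primitive, solve the linear equation $x_1c_1+x_2c_2=1/4$ for $x_1,x_2$, and keep those with $z_1<z_2$ in $(0,1)$. This yields the sporadic entries, among them Machin's own $(-1,1/239,4,1/5)$, whose norms give $239^2+1=2\cdot 13^4$ and $5^2+1=2\cdot 13$.

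The low-exponent range $\max(s,t)\leq 2$ is not covered by the deep theorems and must be handled by hand. The case $s=t=1$ forces $x_1=x_2=1$ (any other choice being eliminated by the $\pi$-equation, since every $c_k\in\frac14\Z$) and amounts to $\beta_1\beta_2=\lambda(1+i)$ with $\lambda$ a positive rational, that is, to determining when a product of two Gaussian integers each having a power-of-two coordinate has equal real and imaginary parts; solving this elementary but delicate constraint is what produces the two infinite parametric families, while $\max(s,t)=2$ contributes the remaining sporadic identities. I expect this elementary range to be the main obstacle: unlike the high-exponent case it cannot be closed off by a citation, it is precisely where the power-of-two hypothesis must be used most carefully to separate the finitely many sporadic quadruples from the genuinely infinite families, and it demands a complete, clean enumeration rather than an appeal to an external theorem. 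Finally I would assemble all surviving quadruples, remove the redundancy coming from the symmetry $z_1\leftrightarrow z_2$, and verify directly that the listed ten sporadic solutions and two families satisfy \eqref{eq:0}.
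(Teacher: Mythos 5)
Your proposal follows essentially the same route as the paper: both translate \eqref{eq:0} into a multiplicative identity among Gaussian integers, use unique factorization in $\Z[i]$ (your ``linear independence of arguments of Gaussian primes'' is the same fact, which the paper implements by raising the identity to the $12$th power and comparing factorizations) to force the odd parts of the norms $b_k^2+2^{2a_k}$, resp.\ $(b_k^2+1)/2$, to be coprime powers of a common base with exponents $|u_2|,|u_1|$, and then invoke the known solutions of $x^2+2^a=y^n$ and $x^2+1=2y^n$ from \cite{Cohn,Lu} when $\max(|u_1|,|u_2|)\ge 3$. Your identification of the exponent-$\le 2$ range as the place where the two parametric families and most sporadic solutions emerge, to be settled by direct case analysis plus a finite verification, matches exactly how the paper concludes.
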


\begin{remark}
Allowing $a_2=0$ in the first parametric family we get the solution $\left(1,\frac{1}{3},1,\frac{1}{2}\right)$ which also belongs to the second parametric family 
(for $a_2=1$). We cannot allow $a_2=0$ in the second family because $z_2$ vanishes in this case. 
\end{remark}

\subsection{A reformulation}
 
We write $x_k=u_k/(w/d_0)$, where $u_1,u_2,d_0\ge 1$ are integers with $|u_1|,|u_2|,d_0,w\ge 1$ and $\gcd(u_1,u_2)=1$ and so
\begin{equation}
\label{eq:u1u2}
u_1\arctan(z_1)+u_2\arctan(z_2) = \frac{w\pi}{4d_0} = \frac{c\pi}{d}.
\end{equation}
Formally, we write first $x_1=U_1/w$, $x_2=U_2/w$, with a common denominator $w$, then let $d_0 = \gcd(U_1,U_2)$, so $u_1 = U_1/d_0$, $u_2=U_2/d_0$. 
We write $w/(4d_0)=c/d\ne 0$ in reduced terms. Applying $\tan$, we get that $\tan(c\pi/d)\in \Q$. In particular, this implies that $e^{2c i \pi/d}\in \Q[i]$, so $e^{2ci \pi/d}$ is a root of unity of order at most~$2$. 
This implies that $\varphi(d)\le 2$, so $d\in \{1,2,3,4,6\}$, where $\varphi$ is the Euler's totient function. In fact, by using \cite[Cor.~3]{Cal2}, it can be seen that $d\in \{1,2,4\}$, but we will not use this fact because this stronger restriction does not imply substantial changes in our proof and in this way our argument is more self-contained.
To fix notations, we assume that $a_1\le a_2$ and we do not consider the case $a_1=a_2=0$, since those solutions have already been found in~\cite{Stormer}. Thus, $a_2\ge 1$. 

\subsection{Proof of Theorem~\ref{theo:power2}}

Assume for the sake of the argument that $z_1=2^{a_1}/b_1$, $z_2=2^{a_2}/b_2$. The cases where $z_k=b_k/2^{a_k}$ for one or both of $k=1,2$, can be reduced to the present one via the formula 
\[
\arctan\left(\frac{1}{x}\right) = \frac{\pi}{2}-\arctan(x),
\]
arriving to an equation similar to \eqref{eq:u1u2} with a different value of $c/d$ in the right-hand side. Up to replacing $(u_1,u_2)$ by 
$(-u_1,-u_2)$ if needed, we assume that $u_1\ge 1$. Noting that $d\mid 12$, it follows that $12/d\in \N$. Next, we get
\[
(1+i2^{a_1}/b_1)^{12u_1} (1+i 2^{a_2}/b_2)^{12u_2}
= (1-i2^{a_1}/b_1)^{12u_1} (1-i 2^{a_2}/b_2)^{12u_2}.
\]
Thus, 
\[
(b_1+i2^{a_1})^{12u_1} (b_2\pm i 2^{a_2})^{12|u_2|}
= (b_1-i 2^{a_1})^{12u_1} (b_2\mp i2^{a_2})^{12|u_2|},
\]
where the sign in $\pm $ on the left is $\sgn(u_2)$ (and the sign in $\mp$ on the right is $-\sgn(u_2)$). Extracting $12$th roots we get
\[
(b_1+i2^{a_1})^{u_1} (b_2\pm i 2^{a_2})^{|u_2|} 
= \zeta(b_1-i 2^{a_1})^{u_1} (b_2\mp i2^{a_2})^{|u_2|},
\]
where $\zeta$ is a root of unity in $\Q[i]$. Hence, $\zeta\in \{\pm 1,\pm i\}$. 

\subsubsection{The case $a_1\ge 1$}

Assume first that $a_1\ge 1$. Then $b_1+2^{a_1} i$ and $b_1-2^{a_1} i$ are coprime in $\Z[i]$ since their norms are $b_1^2+2^{2a_1}$ (odd) but the norm of their difference $2^{a_1+1} i$ is a power of $2$ and the same is true about 
$b_2+2^{a_2} i$ and $b_2-2^{a_2} i$. It follows up to relabelling $\zeta$ that 
\[
(b_1 + 2^{a_1} i)^{u_1} = \zeta (b_2 \mp 2^{a_2} i)^{|u_2|},
\]
where $\zeta$ is unit in $\Z[i]$. Thus, $\zeta\in \{\pm 1,\pm i\}$. If $u_1=|u_2|$, then $1=u_1=|u_2|$ (since they are coprime) so $b_1+2^{a_1} i= \zeta(b_2\mp 2^{a_2} i)$, so we get $b_1=b_2, a_1=a_2$, so $z_1=z_2$, a case that we do not consider. 
So, we assume that $u_1\ne |u_2|$. Then there exists $\gamma\in \Z[i]$ such that 
\[
b_1+2^{a_1} i = \zeta_1 \gamma^{|u_2|}
\quad \text{and}\quad 
b_2\mp 2^{a_2} i = \zeta_2 \gamma^{u_1},
\]
where again $\zeta_1,\zeta_2$ are in $\{\pm 1,\pm i\}$. Assume next that $\{u_1,|u_2|\}=\{1,2\}$. Swapping $u_1$ and $|u_2|$ if needed and incorporating $\zeta_1$ into $\gamma$ we get
\[
b_1+2^{a_1}i = \gamma
\quad \text{and}\quad 
b_2 \mp 2^{a_2} i = \pm \zeta_2 \gamma^2.
\]
Thus, 
\[
b_2\mp 2^{a_2} i = \zeta_2 (b_1+2^{a_1} i)^2 = \zeta_2(b_1^2-2^{2a_1}+2^{a_1+1}b_1 i),
\quad \zeta_2\in \{\pm 1,\pm i\}.
\]
Since $b_2$ and $b_1^2-2^{2a_1}$ are both odd, we get that $\zeta_2\in \{\pm 1\}$, $2^{a_2}=2^{a_1+1} b_1$ and $b_2=\pm (b_1^2-2^{2a_1})$. The first equation leads to $a_2=a_1+1$, $b_1=1$, and now the second leads to 
$b_2=\pm (1^2-2^{2a_1})$, so $b_2=2^{2a_1}-1$. This leads to
\[
2\arctan\left(\frac{1}{2^{a_1}}\right) - \arctan\left(\frac{2^{a_1+1}}{2^{2a_1}-1}\right) = 0,
\]
which follows from the well-known formula
\[
2\arctan(x) = \arctan\left(\frac{2x}{1-x^2}\right),
\]
for $x\in (-1,1)$, with $x=1/2^{a_1}$. 
When $a_1=1$ the above formula gives rise to the solution $(1,\frac{1}{2},\frac{1}{2},\frac{3}{4})$. 
For $a_1>1$ this looks like \eqref{eq:u1u2} except that it has $c/d=0$, which is not convenient for~us. 

This was for $a_1\ge 1$ and $\{u_1,|u_2|\} = \{1,2\}$. Up to swapping $u_1,u_2$, we next assume that $|u_2|\ge 3$. Then
\[
b_1+2^{a_1} i = \zeta_1 \gamma^{|u_2|}.
\]
Taking norms we get
\[
b_1^2+2^{2a_1}=y^{|u_2|}.
\]
The solutions of the equation
\[
x^2+2^a=y^n,
\]
with $x$ odd and $n\ge 3$, have been found in~\cite{Lu}. They are 
\[
5^2+2=3^3,\quad 11^2+2^2=5^3,\quad 7^2+2^5=3^4.
\]
Only the second one is convenient for us (the exponent of $2$ must be even) giving $b_1=11$, $a_1=1$, $u_2=\pm 3$. Thus, $\gamma=1+2i$ and $u_1\in \{1,2\}$. Hence, we must also have 
\[
b_2\mp 2^{a_2} i=\zeta_2 (1\pm 2i)^{1,2} \in \{\zeta_2(1\pm 2i), \zeta_2(-3\pm 4i)\}.
\]
Thus, we get $\zeta_2\in \{\pm 1\}$, $(b_2,a_2)\in \{(1,1),(3,2)\}$.

\subsubsection{The case $a_1=0$}

In case $b_1$ is even, the same arguments apply because $b_1+i$ and $b_1-i$ are coprime since their norm is $b_1^2+1$ (odd) and the norm of their difference $2i$ is $4$ which is a power of~$2$. The previous arguments apply. We get 
$(u_1,u_2)=(1,\pm 1)$ and $(b_1,a_1)=(b_2,a_2)$ which leads $z_1=z_2$ which is not convenient. The case $(u_1,|u_2|)=(1,2)$ does not lead to convenient solutions since $b_2=2^{2a_1}-1=0$, which is not possible. 
The case $\max\{u_1,|u_2|\}\ge 3$, leads again to $x^2+2^{2a}=y^n$, where $(x,a)=(b_k,a_k)$ for some $k \in \{1,2\}$. This equation has no solution with $a=0$, so we get $(b_2,a_2,u_1)=(11,1,3)$. Hence, 
$|u_2|\in \{1,2\}$, $a_1=0$, $\gamma=1+2i$, so 
$b_1+i = \zeta_1\gamma^{1,2}\in \{\zeta_1(1+2i),\zeta_1(-3+4i)\}$, 
so the only possibility is $b_1=2$, $u_2=\pm 1$.

Finally suppose that $a_1=0$, $b_1$ is odd. In this case in 
\[
(b_1+i)^{4u_1} (b_2\pm 2^{a_2} i)^{4|u_2|} = (b_1-i)^{4u_1} (b_2\mp 2^{a_2} i)^{4|u_2|},
\]
we have that $b_1+i$ has norm $b_1^2+1\equiv 2\pmod 8$. Thus, $1+i\mid b_1+i$ and $(b_1+i)/(1+i)$ is an integer in $\Z[i]$ of odd norm. Thus,
\[
\left(\frac{b_1+i}{1+i}\right)^{4u_1} (b_2\pm 2^{a_2} i)^{4|u_2|}
= \left(\frac{b_1-i}{1-i}\right)^{4u_1} (b_2\mp 2^{a_2} i)^{4|u_2|}.
\]
Now the integer $(b_1+i)/(1+i)$ is coprime to $(b_1-i)/(1-i)$ (since they have odd norms and $2$ is linear combination of the above two integers with coefficients in $\Z[i]$), so we get that 
\[
\left(\frac{b_1+i}{1+i}\right)^{4u_1} = \zeta (b_2\mp 2^{a_2} i)^{4|u_2|},
\]
for some unit $\zeta$ in $\Z[i]$. Thus, there is $\gamma\in \Z[i]$ and two units $\zeta_1,\zeta_2$ such that 
\begin{equation}
\label{eq:con}
\frac{b_1+i}{1+i}=\zeta_1 \gamma^{|u_2|}
\quad \text{and}\quad 
b_2\mp 2^{a_2} i=\zeta_2 \gamma^{u_1}.
\end{equation}
If $u_1=|u_2|$, then $u_1=|u_2|=1$. In this case we get
\[
b_1+i=\zeta(1+i)(b_2\mp 2^{a_2} i) = \zeta(b_2\pm 2^{a_2}+i(b_2\mp 2^{a_2})),
\quad 
\zeta\in \{\pm 1,\pm i\}.
\]
We study the four possibilities. If $\zeta=\pm 1$, we then get
\[
b_1+i = \pm (b_2\pm 2^{a_2}+i(b_2\mp 2^{a_2})).
\]
This gives 
\[
b_1 = \pm (b_2\pm 2^{a_2}),\quad 1=\pm (b_2\mp 2^{a_2}),
\]
which correspond to the systems
\[
\left\{\begin{aligned} b_1 & = b_2 + 2^{a_2}, \\ 
1 & = b_2 - 2^{a_2}, \end{aligned}\right. \quad 
\left\{\begin{aligned} b_1 & = b_2 - 2^{a_2}, \\ 
1 & = b_2 + 2^{a_2}, \end{aligned}\right. \quad 
\left\{\begin{aligned} b_1 & = -(b_2 + 2^{a_2}), \\ 
1 & = -(b_2 - 2^{a_2}), \end{aligned}\right. \quad 
\left\{\begin{aligned} b_1 & = -(b_2 - 2^{a_2}), \\ 
1 & = -(b_2 + 2^{a_2}). \end{aligned}\right.
\]
Only the first system gives the acceptable solution $b_2=2^{a_2}+1$, $b_1=b_2+2^{a_2}=2^{a_2+1}+1$ yielding the first parametric family together with the solution with $a_2=0$, which is $\left(1,\frac{1}{3},1,\frac{1}{2}\right)$ and which is also a member of the second parametric family. 
The other three systems do not give acceptable solutions since one (or both) of $b_1$,~$b_2$ are negative.  
Assume next that $\zeta=\pm i$. We obtain 
\[
b_1 = \mp (b_2\mp 2^{a_2}),\qquad 1 = \pm (b_2\pm 2^{a_2}),
\]
which correspond to the systems
\[
\left\{\begin{aligned} b_1 & = -b_2 + 2^{a_2}, \\ 
1 & = b_2 + 2^{a_2}, \end{aligned}\right. \quad
\left\{\begin{aligned} b_1 & = -b_2 - 2^{a_2}, \\ 
1 & = b_2 - 2^{a_2}, \end{aligned}\right. \quad 
\left\{\begin{aligned} b_1 & = b_2 - 2^{a_2}, \\ 
1 & = - b_2 - 2^{a_2}, \end{aligned}\right. \quad 
\left\{\begin{aligned} b_1 & = b_2 + 2^{a_2}, \\ 
1 & = -b_2 + 2^{a_2}, \end{aligned}\right.
\]
where only the last one gives the acceptable solution $b_2=2^{a_2}-1$, $b_1=b_2+2^{a_2}=2^{a_2+1}-1$. This yields the second parametric family, after using $\arctan(x)=\pi/2-\arctan(1/x)$ for $x>0$. 
Again the other three systems do not give convenient solutions since one or both of $b_1,b_2$ are negative. 

Assume next that $u_1\ne |u_2|$. If $(u_1,|u_2|) \in \{(2,1),(1,2)\}$, then we get equations 
\[
\frac{b_1+i}{1+i} = \gamma
\quad \text{and}\quad 
b_2\pm 2^{a_2} i = \zeta_2\gamma^2,
\]
or 
\[
b_2\pm 2^{a_2} i = \gamma 
\quad \text{and}\quad 
\frac{b_1+i}{1+i} = \zeta_1\gamma^2.
\]
In the first case, we get 
\[
b_2\pm 2^{a_2} i = \zeta\left(\frac{b_1+i}{1+i}\right)^2 = \frac{\zeta'}{2} (b_1^2-1+2b_1i)
\qquad (\text{with } \zeta':=-i\zeta),
\]
which gives $b_2=b_1$, $2^{a_2+1}=b_1^2-1$. The only solution of the last equation above is $a_2=2$, $b_1=b_2=3$. This leads to the useless formula
\[
2\arctan\left(\frac{1}{3}\right) - \arctan\left(\frac{3}{4}\right) = 0.
\]
In the second case, we get
\begin{align*}
b_1+i & = \zeta(1+i) (b_2\mp 2^{a_2} i)^2 = \zeta(1+i)(b_2^2-2^{2a_2}\mp 2^{a_2+1}b_2 i) \\
&= \zeta(b_2^2-2^{2a_2}\pm 2^{a_2+1}b_2+(b_2^2-2^{2a_2} \mp 2^{a_2+1}b_2)i).
\end{align*}
When $\zeta=\pm 1$, we find
\[
b_2^2-2^{2a_2}\pm 2^{a_2+1}b_2 = b_1, \qquad 
b_2^2-2^{2a_2}\mp 2^{a_2+1}b_2 = 1,
\]
or
\[
b_2^2-2^{2a_2}\pm 2^{a_2+1}b_2 = -b_1, \qquad 
b_2^2-2^{2a_2}\mp 2^{a_2+1}b_2 = -1.
\]
The first case gives rise to the system
\[
\left\{\begin{aligned} 
(b_2\pm 2^{a_2})^2-2^{2a_2+1} & = b_1,\\
(b_2\mp 2^{a_2})^2 & = 2^{2a_2+1}+1. 
\end{aligned}\right.
\]
This is solvable in integers only when $a_2=1$. In this case, we find
\[
\left\{\begin{aligned} 
(b_2+2)^2-8 & = b_1, \\
(b_2-2)^2 & = 9,
\end{aligned} \right. \qquad 
\left\{\begin{aligned} 
(b_2-2)^2-8 & = b_1, \\
(b_2+2)^2 & = 9, 
\end{aligned}\right.
\]
so from $(b_2-2)^2=9$, we have the only acceptable solution $b_2=5$, therefore $b_1=41$, while from $(b_2+2)^2=9$, we have the only acceptable solution $b_2=1$, but this leads to $b_1=-7$, which is not acceptable. On the other hand the second case corresponds to
\[
\left\{\begin{aligned} 
(b_2\pm 2^{a_2})^2-2^{2a_2+1} & = -b_1, \\
(b_2\mp 2^{a_2})^2 & = 2^{2a_2+1}-1, 
\end{aligned}\right.
\]
which is solvable in integers only when $a_2=0$. In this case we find
\[
\left\{\begin{aligned} (b_2+1)^2-2 & = -b_1, \\
(b_2-1)^2 & = 1,
\end{aligned} \right. \qquad 
\left\{\begin{aligned} (b_2-1)^2-2 & = - b_1, \\
(b_2+1)^2 & = 1,
\end{aligned}\right.
\]
so from $(b_2-1)^2=1$, we have the only acceptable solution $b_2=2$, so $b_1=-7$, which is not acceptable, while from $(b_2+1)^2=1$ we do not have acceptable solutions. 
Finally, when $\zeta=\pm i$, we find
\[
b_2^2-2^{2a_2}\mp 2^{a_2+1}b_2 = -b_1,\qquad 
b_2^2-2^{2a_2}\pm 2^{a_2+1}b_2 = 1,
\]
or
\[
b_2^2-2^{2a_2}\mp 2^{a_2+1}b_2 = b_1,\qquad 
b_2^2-2^{2a_2}\pm 2^{a_2+1}b_2 = -1.
\]
The first case gives rise to the system
\[
\left\{\begin{aligned} 
(b_2\mp 2^{a_2})^2-2^{2a_2+1} & = -b_1, \\
(b_2\pm 2^{a_2})^2 & = 2^{2a_2+1}+1, 
\end{aligned}\right.
\]
which is solvable in integers only when $a_2=1$. Accordingly, we find
\[
\left\{\begin{aligned} 
(b_2-2)^2-8 & = -b_1, \\
(b_2+2)^2 & = 9,
\end{aligned} \right. \qquad 
\left\{\begin{aligned} 
(b_2+2)^2-8 & = -b_1, \\
(b_2-2)^2 & = 9, 
\end{aligned}\right.
\]
so from $(b_2+2)^2=9$ we have the only acceptable solution $b_2=1$, therefore $b_1=7$, while from $(b_2-2)^2=9$ we have the only acceptable solution $b_2=5$ but this leads to $b_1=-41$, which is not acceptable. On the other hand the second case corresponds to 
\[
\left\{\begin{aligned} 
(b_2\pm 2^{a_2})^2-2^{2a_2+1} & = b_1, \\
(b_2\mp 2^{a_2})^2 & = 2^{2a_2+1}-1, 
\end{aligned}\right.
\]
which is solvable in integers only when $a_2=0$. In this case, we find
\[
\left\{\begin{aligned} 
(b_2+1)^2-2 & = b_1, \\
(b_2-1)^2 & = 1, 
\end{aligned} \right. \qquad 
\left\{\begin{aligned} 
(b_2-1)^2-2 & = b_1, \\
(b_2+1)^2 & = 1, 
\end{aligned}\right.
\]
so from $(b_2-1)^2=1$ we have the only acceptable solution $b_2=2$, therefore $b_1=7$, while from $(b_2+1)^2=1$, we do not have acceptable solutions. Resuming this discussion, we find 
\[
(a_2,b_1,b_2) \in \{(0,7,2),(1,7,1),(1,41,5)\}.
\]
The first two instances lead to the same sporadic solution $\left(-1,\frac{1}{7},2,\frac{1}{2}\right)$ as $2^{a_1}/b_1=1/7$ and $2^{a_2}/b_2\in \{1/2,2\}$, namely the second one in the list from the statement of the theorem, while the third instance leads to the seventh sporadic solution $\left(1,\frac{1}{41},2,\frac{2}{5}\right)$ from the statement of the theorem. 

Finally, assume that $\max\{u_1,|u_2|\}\ge 3$. In this case taking norms in \eqref{eq:con} we get 
\[
b_1^2+1 = 2y^{u_1}
\quad \text{and}\quad 
b_2^2+2^{2a_2} = y^{|u_2|}.
\]
If $|u_2|\ge 3$, then we saw before that $b_2=11$, $a_2=1$ are the only possibilities and then $y=5$. If this is so and $u_1\in \{1,2\}$, we get $b_1^2+1\in\{2\cdot 5,\,2\cdot 5^2\}$, so $b_1\in \{3,7\}$. 
Finally, if $u_1\ge 3$, then we get the equation 
\[
b_1^2+1 = 2y^n,
\]
for some $n\ge 3$, and the only solutions are 
$(b_1,y,n) \in \{ (1,1,n), (239,13,4)\}$ (see~\cite{Cohn}). The first one gives no solution for $b_2^2+2^{2a_2} = y^{|u_2|}=1.$ The second one gives $u_1=4$, $b_1=239$. If also $|u_2|\ge 3$, then $u_2=\pm 3$, $b_2=11$, $a_2=1$, otherwise $u_2\in \{\pm 1,\pm 2\},$ and 
\[
b_2\pm 2^{a_2} i = \zeta (3\pm 2i)^{1,2}\in \{\zeta(3\pm 2i), \zeta(5\pm 12i) \},
\]
and the only convenient one is $u_2\in \{\pm 1\}$, $b_2=3$, $a_2=1$. Collecting all the intermediary values, we get the theorem modulo checking for the solutions to \eqref{eq:u1u2} which come from values of the parameters $(u_1,u_2,a_1,b_1,a_2,b_2,c/d)$ in the ranges $|u_k|\le 4$, $a_k\in \{0,1,2\}$ for both $k=1,2$, $d\in \{1,2,3,4,6\}$, $0<|c|\le 24$ and $b_k\in \{1,2,3,5,7,11,41,239\}$ for $k=1,2$. Both Mathematica and Maple codes returned the ten listed sporadic solutions. 

\section{The Machin's formulas machine}
\label{sec:R_j}

An easy way to prove well-known formulas as $\arctan(x) + \arctan(1/x) = \sgn(x)\pi/2$ or 
\begin{equation*}
\arctan(x) - \frac{1}{2} \arctan\left(\frac{2x}{1-x^2}\right)
=
\begin{cases} 
\pi/2, & \text{if }x>1, \\
0, & \text{if }|x|<1, \\
-\pi/2, & \text{if }x<-1,
\end{cases}
\end{equation*}
or many others, is to use derivatives. For instance, we can check that the derivative of $\arctan\big(\frac{2x}{1-x^2}\big)$ coincides with $\frac{d}{dx}\arctan(x) = \frac{1}{1+x^2}$ except for a multiplicative constant, so a suitable linear combination of $\arctan(x)$ and $\arctan\big(\frac{2x}{1-x^2}\big)$ gives a function whose derivative is zero, therefore it is piecewise constant (namely it is constant except at the discontinuity points). 

More generally, we can find relations of the form
\[
  \arctan(x) + C \arctan(f(x)) = \text{constant}
\]
if we have functions $f(x)$ such that
\begin{equation}
\label{eq:atanf}
\frac{d}{dx}\arctan(f(x)) = \frac{r}{1+x^2},
\end{equation}
for some constant $r$. Furthermore, it is easy to check that 
\begin{equation}
\label{eq:compos}
\begin{aligned}
\frac{d}{dx}\arctan(f(x)) = \frac{r}{1+x^2},
\quad
&\frac{d}{dx}\arctan(g(x)) = \frac{s}{1+x^2}
\\
&\quad\Longrightarrow\quad
\frac{d}{dx}\arctan(g(f(x))) = \frac{rs}{1+x^2},
\end{aligned}
\end{equation}
so the composition of functions satisfying \eqref{eq:atanf} provides new examples.

For differentiable functions, \eqref{eq:atanf} is equivalent to solve the differential equation
\begin{equation}
\label{eq:ED}
\frac{f'(x)}{1+f(x)^2} = \frac{k}{1+x^2}.
\end{equation}
The solutions of this equation are
\begin{equation}
\label{eq:f(x)c}
  f(x) = \tan(k \arctan(x)+ c), 
\end{equation}
with $f(0) = \tan(c)$ and $c\in(-\pi/2,\pi/2)$. For our interest concerning Machin-like formulas, we want to have functions which are rational; that is, are ratios of polynomials with coefficients in~$\Z$.

Moreover, if we fix $c$ and denote the solution of \eqref{eq:ED} by $f_k$, the use of 
\[
\tan(a+b) = \frac{\tan a + \tan b}{1-(\tan a)(\tan b)}
\]
gives
\begin{equation}
\label{eq:fkrecu}
\begin{aligned}
f_{k+1}(x) &= \tan\big((k \arctan(x)+ c) + \arctan(x)) \\
&= \frac{\tan(k \arctan(x)+ c)+\tan(\arctan(x))}{1-\tan(k \arctan(x)+ c)\tan(\arctan(x))}
= \frac{f_{k}(x)+x}{1-xf_{k}(x)}.
\end{aligned}
\end{equation}
From this relation, if $f_1(x)$ is a rational function for a certain $c$, every function $f_k(x)$ will be rational. But $f_{1}(x) = (\tan(c)+x)/(1-x\tan(c))$, so we want that $\tan(c) \in \Q$ (or we start with $f_0(x)$ such that $f_0(x) = \tan c$, and we arrive at the same condition).

So, we take $c \in \pi\Q$. This is not compulsory, but it is suitable for our purposes. It is well known that $\tan(c) \in \Q$ if and only if $\tan(c) = 0$ or~$\pm 1$. 
Because $\tan$ is $(-\pi/2,\pi/2)$-periodic, we can restrict to one of the cases: $c=0$, $c=\pi/4$, $c=\pi/2$ and $c=-\pi/4$ (or $c=3\pi/4$, that will be more convenient notationwise).

The recurrence relation \eqref{eq:fkrecu} is nice and could be more widely studied, but here we are only interested in more explicit formulas.

\subsection{The functions $R_j(n,x)$}
\label{subsec:Rj}

Let us recall De Moivre's formula
\[
\cos (n \theta)+i \sin(n \theta) = (\cos (\theta)+i \sin(\theta))^{n}.
\]
Using the binomial expansion and equaling imaginary and real parts we get
\begin{align*}
  \sin(n \theta) &= \sum_{r=0}^{\lfloor(n-1)/2\rfloor} (-1)^{r} 
  \binom{n}{2r+1} \cos^{n-2 r-1}(\theta) \sin^{2r+1}(\theta)
  \\&= \cos^{n}(\theta) \sum_{r=0}^{\lfloor(n-1)/2\rfloor} (-1)^{r} 
  \binom{n}{2r+1} \tan^{2r+1}(\theta),
\end{align*}
\[
  \cos (n \theta) = \sum_{r=0}^{\lfloor n/2\rfloor} (-1)^{r} 
  \binom{n}{2r} \cos^{n-2 r}(\theta) \sin^{2 r}(\theta)
  = \cos^{n}(\theta) \sum_{r=0}^{\lfloor n/2\rfloor} (-1)^{r} 
  \binom{n}{2r} \tan^{2r}(\theta).
\]
And, by dividing these expressions,
\begin{equation*}
  \tan (n \theta) = \frac{\sum_{r=0}^{\lfloor(n-1)/2\rfloor} (-1)^{r}
    \binom{n}{2r+1} \tan^{2r+1}(\theta)}
  {\sum_{r=0}^{\lfloor n/2\rfloor} (-1)^{r} \binom{n}{2r} \tan^{2r}(\theta)}.
\end{equation*}
For $\theta = \arctan(x)$, this becomes
\[
\tan (n \arctan x) = 
\frac{\sum_{r=0}^{\lfloor(n-1)/2\rfloor} (-1)^{r} \binom{n}{2r+1} x^{2r+1}}
  {\sum_{r=0}^{\lfloor n/2\rfloor} (-1)^{r} \binom{n}{2r} x^{2r}},
\]
so this is an example of \eqref{eq:f(x)c} with $c=0$; namely a rational function. 

By convenience, let us denote
\begin{equation*}
  \numer_n(x) = \sum_{r=0}^{\lfloor(n-1)/2\rfloor} (-1)^{r} \binom{n}{2r+1} x^{2r+1},
  \quad
  \denom_n(x) = \sum_{r=0}^{\lfloor n/2\rfloor} (-1)^{r} \binom{n}{2r} x^{2r},
\end{equation*}
with $\numer_0 = 0$ and $\denom_0 = 1$.
Then, for $c=0$ in \eqref{eq:f(x)c}, we have
\begin{equation}
\label{eq:R0}
  R_0(n,x) = \frac{\numer_n(x)}{\denom_n(x)}, \quad n = 0,1,2,\dots.
\end{equation}
The above functions satisfy $\frac{d}{dx} \arctan(R_0(n,x)) = n/(1+x^2)$.
The first few of these functions are
\begin{gather*}
  R_0(0,x) = 0, 
  \quad
  R_0(1,x) = x, 
  \quad
  R_0(2,x) = \frac{-2x}{x^2-1}, 
  \quad
  R_0(3,x) = \frac{x^3-3x}{3x^2-1}, 
  \\
  R_0(4,x) = \frac{-4x^3+4x}{x^4-6x^2+1}, 
  \quad
  R_0(5,x) = \frac{x^5-10x^3+5x}{5x^4-10x^2+1}.
\end{gather*}

For the other values of $c$, we take $c = j\pi/4$ with $j=1,2,3$, and denote the corresponding solutions of \eqref{eq:f(x)c} by $R_1(n,x)$, $R_2(n,x)$ and $R_3(n,x)$, respectively.

For $c=\pi/2=2\pi/4$ it is clear that
\[
\tan(n\theta + \pi/2) = \frac{\sin(n\theta + \pi/2)}{\cos(n\theta + \pi/2)}
= \frac{-\cos(n\theta)}{\sin(n\theta)},
\]
so, for $c=\pi/2$, we have 
\begin{equation}
\label{eq:R2}
  R_2(n,x) = \frac{-\denom_n(x)}{\numer_n(x)} = \frac{-1}{R_0(n,x)},
  \quad n = 1,2,\dots.
\end{equation}
The above functions satisfy again $\frac{d}{dx} \arctan(R_2(n,x)) = n/(1+x^2)$.

For $c=\pi/4$,
\[
  \tan(n\theta + \pi/4) = \frac{\cos(n\theta)+\sin(n\theta)}{\cos(n\theta)-\sin(n\theta)},
\]
so, for $c=\pi/4$, we have 
\begin{equation}
\label{eq:R1}
  R_1(n,x) = \frac{\denom_n(x)+\numer_n(x)}{\denom_n(x)-\numer_n(x)},
  \quad n = 0,1,2,\dots.
\end{equation}
The above functions satisfy again $\frac{d}{dx} \arctan(R_1(n,x)) = n/(1+x^2)$.
For instance, $R_1(0,x)=1$,
\begin{gather*}
  R_1(1,x) = \frac{-x-1}{x-1}, 
  \quad
  R_1(2,x) = \frac{x^2-2x-1}{x^2+2x-1}, 
  \quad
  R_1(3,x) = \frac{-x^3-3x^2+3x+1}{x^3-3x^2+3x+1}, 
  \\
  R_1(4,x) = \frac{x^4-4x^3-6x^2+4x+1}{x^4+4x^3-6x^2-4x+1},
  \quad
  R_1(5,x) = \frac{-x^5-5x^4+10x^3+10x^2-5x-1}{x^5-5x^4-10x^3+10x^2+5x-1}.
\end{gather*}

Finally, for $c=3\pi/4$,
\[
\tan(n\theta + 3\pi/4) = \tan(n\theta - \pi/4) 
= \frac{\sin(n\theta)-\cos(n\theta)}{\sin(n\theta)+\cos(n\theta)},
\]
so, for $c=3\pi/4$, we have the functions
\begin{equation}
\label{eq:R3}
  R_3(n,x) = \frac{\numer_n(x)-\denom_n(x)}{\numer_n(x)+\denom_n(x)}
  = \frac{-1}{R_1(n,x)},
  \quad n = 0,1,2,\dots.
\end{equation}
Once more, they satisfy $\frac{d}{dx} \arctan(R_3(n,x)) = n/(1+x^2)$.

Actually, another way to define the functions $R_j(n,x)$, $j=0,1,2,3$, $n\in\N$, is to take
\begin{equation}
\label{eq:defR}
  R_j(n,x) = \tan(n\theta+j\pi/4),
  \qquad x = \tan\theta;
\end{equation}
this definition is valid in a small range of $x$ (to ensure that both the functions $\tan$ and $\arctan$ are invertible). Then the previous arguments show that these functions are rational functions, and, moreover, we have found their explicit expressions. Of course, once that we have a rational function defined in a small interval, we can extend it to the entire~$\C$.

\subsection{Some properties of the functions $R_j(n,x)$}

Here we present some of the algebraic properties of the functions $R_j(n,x)$.
Actually, some of these properties are not related to the Machine-like formulas, but they are interesting by themselves.

Let us first note that, because the function $\tan$ is odd, we could instead use the functions $-R_j(n,x)$ for our purposes; actually, we could use $R_j(-n,x)$ to denote them, because $\arctan(-R_j(n,x)) = -n/(1+x^2)$, a formula which holds by looking at~\eqref{eq:atanf}. This would allow to index the functions $R_j(n,x)$ over $n \in \Z$, but this fact does not have any practical contribution to finding additional Machin-like formulas.

When handling Machin-like formulas, it is more interesting to observe that the relation between $R_j(n,1/x)$ and $R_j(n,x)$, depends on whether $n$ is even or odd:
\begin{equation}
\label{eq:1/x}
\begin{gathered}
  R_0(2n,1/x) = - R_0(2n,x),
  \quad
  R_0(2n+1,1/x) = 1/R_0(2n+1,x),
  \qquad n = 0,1,2,\dots,
  \\
  R_1(2n,1/x) = 1/R_1(2n,x),
  \quad
  R_1(2n+1,1/x) = -R_1(2n+1,x),
  \qquad n = 0,1,2,\dots.
\end{gathered}
\end{equation}
The proofs of these properties are straightforward, so we do not include them. 
In relation to $R_j(n,-x)$, some symmetry properties also hold: 
\begin{equation}
\label{eq:-x}
  R_0(n,-x) = - R_0(n,x),
  \quad 
  R_1(n,-x) = 1/R_1(n,x),
  \qquad n = 0,1,2,\dots.
\end{equation}
Both for \eqref{eq:1/x} and for~\eqref{eq:-x}, the corresponding properties for $R_2$ and $R_3$ can be easily established from the properties of $R_0$ and $R_1$ using \eqref{eq:R2} and~\eqref{eq:R3}, respectively.

According to~\eqref{eq:compos}, the composition of the functions $R_j(n,x)$ generates new functions that are useful in relation to the Machin-like formulas. However, we can see that these functions are not really new. Let us start analyzing a particular case.

Let us first observe that, if the ``internal'' function in the composition is $R_0$, we have
\begin{align*}
R_j(nm,x) &= \tan(nm \arctan(x) + \pi j/4)
= \tan(n \arctan( \tan(m \arctan(x))) + \pi j/4) \\
&= \tan(n \arctan(R_0(m,x)) + \pi j/4)
= R_j(n,R_0(m,x)),
\qquad\qquad j=0,1,2,3;
\end{align*}
this argument is correct in a small enough interval of $x$'s (to guarantee that $\arctan \circ \tan = \operatorname{Id}$), and then by analytic continuation we can ensure that
\begin{equation}
\label{eq:Rjnm}
R_j(nm,x) = R_j(n,R_0(m,x)), \qquad x \in \C, \qquad j=0,1,2,3.
\end{equation}
For each $j,$ this formula allows to compute $R_j(n,x)$ as composition of successive $R_0(p_i,x)$ with a final $R_j(p_i,x)$, where the $p_i$ are the prime factors of~$n$. In particular, it is enough to know $R_j(p,x)$ for primes $p$ in order to generate (or to compute) all the $R_j(n,x)$ by composition.

With full generality, it is not difficult to check that the composition of functions $R_j$ behaves as follows:
\begin{equation*}
  R_j(n,R_i(m,x)) = R_{in+j \operatorname{mod} 4}(nm,x).
\end{equation*}

Let us note that the relation $R_0(n,R_0(m,x)) = R_{0}(nm,x)$ of the functions $R_0$ coincides with the property $T_n(T_m(x)) = T_{nm}(x)$ satisfied by the Chebychev polynomials of the first kind $T_n(x) := \cos(n\arccos (x))$, $x \in [-1,1]$. These were used in \cite{BeNaVa, CiNaVa} in relation to the M\"obius inversion formula. Finally, let us also mention that, although with a different notation, the functions $R_0(n,x)$ have been already defined in~\cite{Cal1} (in particular, their rational expressions are given), but they have not been used to obtain Machin-like identities. As we will comment a little later, the functions $R_3(n,x)$ have been already introduced in~\cite{ABCM} with a different approach.

\subsection{Machin-like formulas associated to $R_j(n,x)$}
\label{subsec:ss}

Once we have defined the functions $R_j(n,x)$ and studied their properties, we can state the main result of this section. 

Before doing that, let us observe the following:
\begin{itemize}
\item[(a)] At $x=0$ or $x=\pm\infty$, the value of the function $\arctan(R_j(n,x))$ (perhaps in the sense of a limit) is always a rational multiple of~$\pi$.
\item[(b)] The functions $\arctan(R_j(n,x))$ are not defined at the roots of the denominator of $R_j(n,x)$. However, and because $\arctan(\infty) - \arctan(-\infty) = \pi$, it is clear that, at every $x$ that is a root of the denominator of $R_j(n,x)$, the jump $\arctan(R_j(n,x^+))-\arctan(R_j(n,x^-))$ is a multiple of~$\pi$.
\end{itemize}

Let us now take any function of the form
\begin{equation}
\label{eq:Fmachin}
  F(x) = \sum_{k=1}^{N} \frac{r_k}{n_k} \arctan(R_{j_k}(n_k,x))
  \qquad\text{with}\quad \sum_{k=1}^{N} r_k = 0,
\end{equation}
defined in $\R$ except at the roots of the denominators.
Since $\frac{d}{dx} \arctan(R_{j}(n,x)) = n/(1+x^2)$, it is clear that
\[
  F'(x) = \sum_{k=1}^{N} \frac{r_k}{n_k} \frac{n_k}{1+x^2} = 0,
\]
so the function $F(x)$ is piecewise constant (the continuity and the differentiability disappear only at the roots of the denominators). Thus, as a consequence of \eqref{eq:Fmachin}, (a) and (b), we have the following result.

\begin{theorem}
\label{theo:machinRj}
Let $R_0(n,x)$, $R_1(n,x)$, $R_2(n,x)$ and $R_3(n,x)$ be the rational functions with integer coefficients defined in \eqref{eq:R0}, \eqref{eq:R1}, \eqref{eq:R2} and \eqref{eq:R3}, respectively, with $n=0,1,2,\dots$, and let $r_k$, $k=1,2,\dots,N$, be integers such that $\sum_{k=1}^{N} r_k = 0$. 
Then, for any $x \in \Q$ we have the Machin-like formula
\begin{equation}
\label{eq:xmachinRj}
  \sum_{k=1}^{N} \frac{r_k}{n_k} \arctan(R_{j_k}(n_k,x)) = \frac{r}{s} \pi,
\end{equation}
with $r/s \in \Q$
(notice that, as the $R_{j}(n,x)$ are rational functions with integer coefficients, the functions $\arctan$ that appear in~\eqref{eq:xmachinRj} are evaluated at rational values).
\end{theorem}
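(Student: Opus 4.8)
The plan is to exploit the fact, already established just before the statement, that the function $F(x)$ in \eqref{eq:Fmachin} is piecewise constant, and to pin down its constant value on each interval as a rational multiple of $\pi$. The hypothesis $\sum_{k=1}^{N} r_k = 0$ is exactly what forces $F'(x)=0$ through the identity \eqref{eq:atanf}, namely $\frac{d}{dx}\arctan(R_{j_k}(n_k,x)) = n_k/(1+x^2)$, so this hypothesis is indispensable: for a generic rational $x$ the individual terms $\arctan(R_{j_k}(n_k,x))$ are not rational multiples of $\pi$, and it is only the piecewise constancy of the whole combination that lets us transport a value computed at a convenient anchor to an arbitrary $x$.

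First I would fix an anchor. Using the description \eqref{eq:defR}, namely $R_j(n,x)=\tan(n\arctan x + j\pi/4)$, and letting $x\to-\infty$, one has $\arctan x \to -\pi/2$, so each $R_{j_k}(n_k,x)$ tends to $\tan(-n_k\pi/2 + j_k\pi/4)\in\{0,\pm 1,\pm\infty\}$ and hence each $\arctan(R_{j_k}(n_k,x))$ tends to a value in $\{0,\pm\pi/4,\pm\pi/2\}$; this is precisely observation~(a). Consequently, on the leftmost interval (to the left of every real pole) the constant value of $F$ is a sum of terms $\frac{r_k}{n_k}$ times a value in $\{0,\pm\pi/4,\pm\pi/2\}$, hence is itself a rational multiple of $\pi$. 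One could equally well anchor at $x=0$, reading the $j_k=2$ terms as one-sided limits, since observation~(a) covers that value as well.

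Next I would propagate this value across the real line. The real poles of $F$ are the real roots of the denominators of the finitely many rational functions $R_{j_k}(n_k,x)$, so they are finite in number and partition $\R$ into finitely many open intervals on each of which $F$ is constant. When $x$ crosses such a pole, only the terms whose denominator vanishes there are discontinuous, and by observation~(b) the corresponding jump $\arctan(R_{j_k}(n_k,x^+))-\arctan(R_{j_k}(n_k,x^-))$ is an integer multiple of $\pi$; hence the jump of $F$ is $\frac{r_k}{n_k}$ times an integer multiple of $\pi$, again a rational multiple of $\pi$. Starting from the leftmost interval and summing these finitely many jumps, an induction over the intervals shows that $F$ is a rational multiple of $\pi$ on every interval, so $F(x)=(r/s)\pi$ for every rational $x$ that is not a pole, which is \eqref{eq:xmachinRj}. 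That the arguments $R_{j_k}(n_k,x)$ are themselves rational when $x$ is rational follows from the integer coefficients of the numerator and denominator.

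The only delicate point, and the one I would be most careful about, is the jump analysis at the poles, i.e.\ observation~(b): one must verify that crossing a real root of the denominator changes $\arctan(R_{j_k}(n_k,x))$ by an exact integer multiple of $\pi$ rather than by some other amount. This rests on $\arctan(+\infty)-\arctan(-\infty)=\pi$ together with the fact that the rational function tends to $\pm\infty$ as $x$ approaches the pole: a root of odd order produces a sign change and a jump $\pm\pi$, a root of even order produces no net jump, and a pole shared by several terms simply contributes the sum of the individual jumps, each a rational multiple of $\pi$. Everything else is bookkeeping, since the analytic core---the derivative identity \eqref{eq:atanf}, the resulting piecewise constancy of $F$, and observations~(a) and~(b)---is already in place.
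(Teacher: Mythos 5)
Your proposal is correct and follows essentially the same route as the paper: the paper's proof consists precisely of the derivative computation showing $F'(x)=0$ (hence piecewise constancy) combined with observations (a) and (b), which you have merely spelled out in more detail by anchoring at $x\to-\infty$ and propagating the rational multiple of $\pi$ across the finitely many poles. The extra care you take with the jump analysis at even- versus odd-order roots of the denominators is a welcome elaboration of observation (b) but does not change the argument.
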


Let us make some comments on this result.
First observe that $r/s$ is constant on intervals of the variable $x$, but the constant changes when any of the $R_{j}(n,x)$ involved in~\eqref{eq:xmachinRj} has a root at the denominator. 
Figures~\ref{fig:ej1}, \ref{fig:ej2} and~\ref{fig:ej3} show, in a graphical way, three examples of the theorem (they are simple examples, without any special interest).
Observe that, with the notation of Theorem~\ref{theo:machinRj}, the coefficients of $\arctan$ in Figure~\ref{fig:ej1} should be written as $\frac{12}{3}$ and $\frac{-12}{4}$, respectively, to get $r_1+r_2=0$; and the same in Figure~\ref{fig:ej2} with $\frac{91}{13}$ and $\frac{-91}{7}$.

Actually, it can happen that $F(x)$ in \eqref{eq:Fmachin} (or the left-hand-side sum in~\eqref{eq:xmachinRj}) is the constant zero function in some of those intervals, and then $r/s = 0$ in that interval;
but, of course, this is not the usual situation. 
For instance, this happens around $x=-1$ in the case of Figure~\ref{fig:ej1}.

\begin{figure}[p] 
\centering
\includegraphics[width=0.6\textwidth]{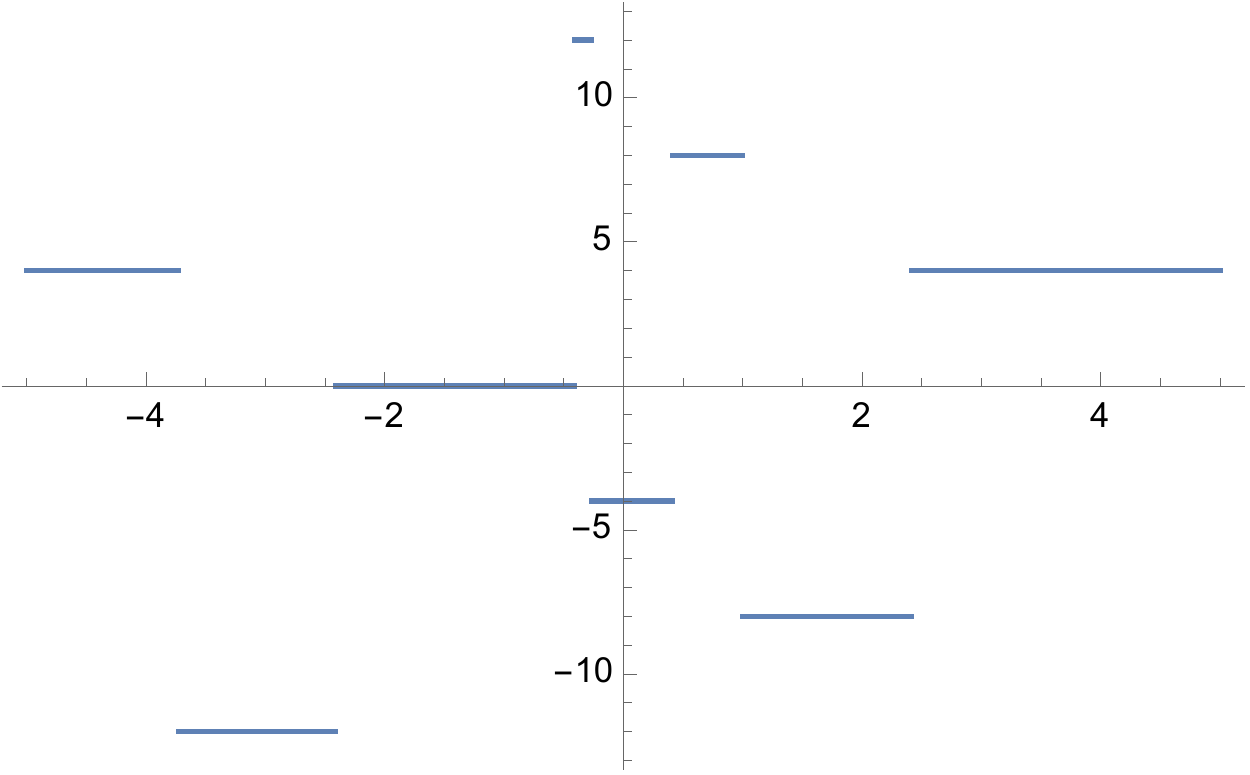}
\caption{The function 
$\frac{4}{\pi} \big(4\arctan(R_3(3,x)) - 3\arctan(R_0(4,x))\big)$, 
for $x \in (-5,5)$.}
\label{fig:ej1}
\end{figure}

\begin{figure}[p]
\centering
\includegraphics[width=0.6\textwidth]{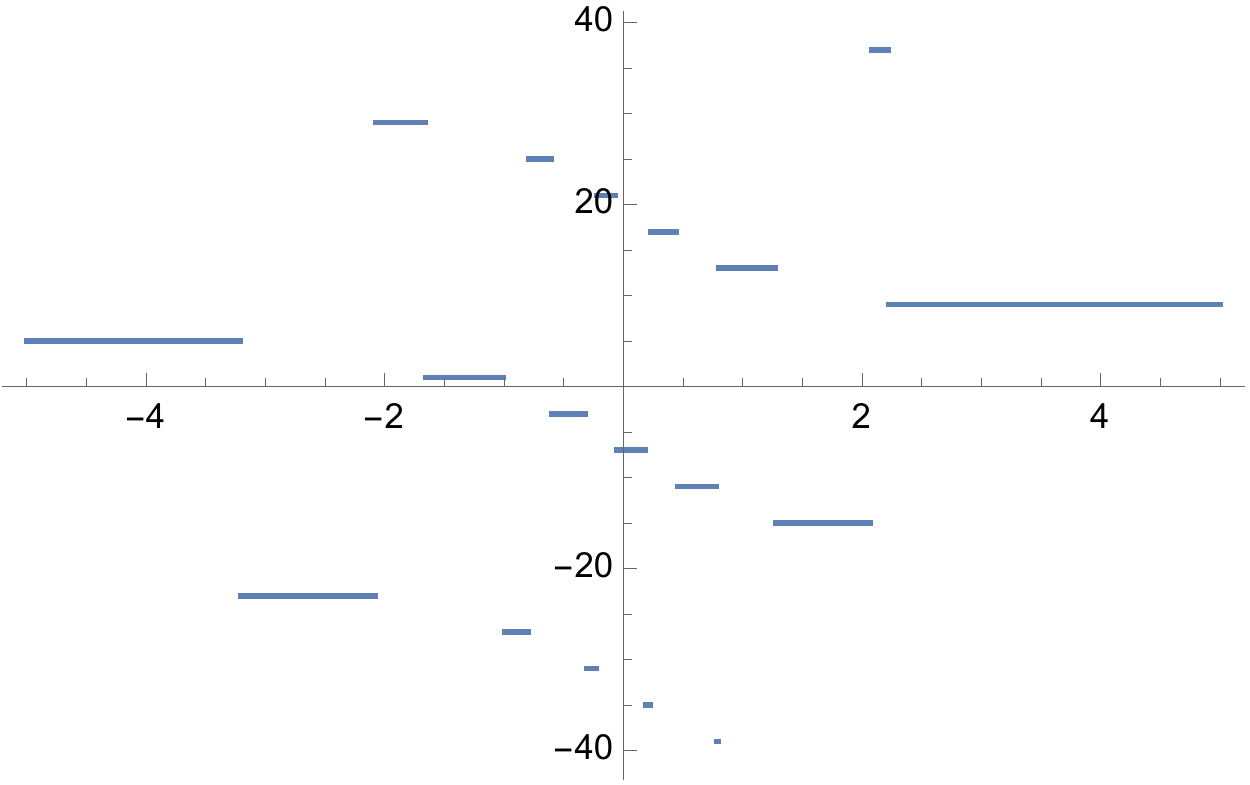}
\caption{The function 
$\frac{4}{\pi} \big(7\arctan(R_3(13,x)) - 13\arctan(R_0(7,x))\big)$, 
for $x \in (-5,5)$.}
\label{fig:ej2}
\end{figure}

\begin{figure}[p]
\centering
\includegraphics[width=0.6\textwidth]{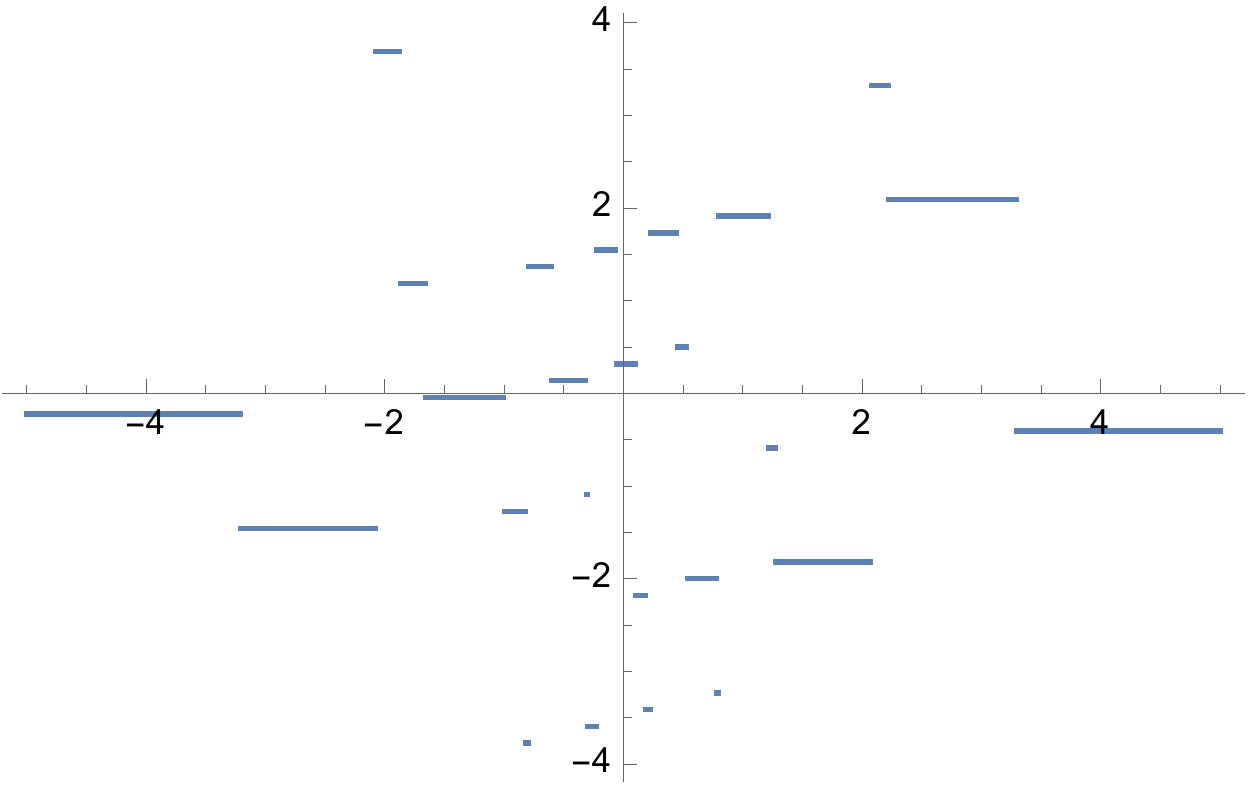}
\caption{The function 
$\frac{4}{\pi} \big(\frac{4}{13}\arctan(R_3(13,x)) - \frac{9}{7}\arctan(R_0(7,x)) + \frac{5}{8}\arctan(R_1(8,x))\big)$, 
for $x \in (-5,5)$.}
\label{fig:ej3}
\end{figure}

An important point is that, if we want to use \eqref{eq:xmachinRj} to evaluate $\pi$ using the Taylor expansion~\eqref{eq:Gregory}, we need that the $R_{j_k}(n_k,x)$ satisfy $|R_{j_k}(n_k,x)| < 1$. Let us now recall that
\begin{equation}
\label{eq:jj'}
R_2(n,x) = \frac{-1}{R_0(n,x)} 
\qquad\text{and}\qquad
R_3(n,x) = \frac{-1}{R_1(n,x)}.
\end{equation}
Moreover, the function $\arctan$ satisfies $\arctan(-1/t) = -\arctan(1/t)$ and
\begin{equation}
\label{eq:atant}
 \arctan\left(\frac{1}{t}\right) = \sgn(t) \frac{\pi}{2} - \arctan(t).
\end{equation}
Thus, if we use instead $R_{j'}$ with the notation $0'=2$, $1'=3$, $2'=0$ and $3'=1$, in the case of \eqref{eq:xmachinRj} with a $|R_{j_k}(n_k,x)| > 1$ we can replace $\arctan(R_{j_k}(n_k,x))$ by the corresponding $\arctan(R_{j'_k}(n_k,x))$, that will satisfy $|R_{j'_k}(n_k,x)| < 1$, so we can use the Taylor expansion. (This cannot be done if $R_{j_k}(n_k,x) = \pm1$, but to evaluate our expression in those $x$ is of no interest because $\arctan(\pm1) = \pm \pi/4$, so the corresponding summand $\arctan(R_{j_k}(n_k,x))$ can be removed from the formula.)

The use of \eqref{eq:atant} in the above mentioned procedure that replaces $R_{j_k}$ by $R_{j'_k}$ modifies the identity \eqref{eq:xmachinRj} to a new identity of the same kind with all the $|R_{j}(n,x)| < 1$. In this process, and since $\arctan(-1/t) = -\arctan(1/t)$, the corresponding $r_k$ in the condition $\sum_{k=1}^{N} r_k = 0$ on the coefficients becomes $-r_k$. But, at the same time, the value of $r/s$ changes; in particular, it can become to be $0$ and in this case we get a useless Machin-like identity.

Finally, we want to comment that Theorem~\ref{theo:machinRj} extends some of the results of \cite{ABCM}, where the authors prove that, for a positive integer~$n$,
\[
n \arctan\left(\frac{1}{x}\right) + \arctan\left(R_3(n,x)\right) = \frac{r}{s} \pi,
\]
with $r/s \in \Q$. Recall that $R_2(1,x)=-1/x$ and then this equality can also be written as
\[
-\frac{n}{1} \arctan\left(R_2(1,x)\right) + \frac{n}{n} \arctan\left(R_3(n,x)\right) = \frac{r}{s} \pi,
\]
that is of the form~\eqref{eq:xmachinRj}.
In that paper, the rational functions $R_3(n,x)$ are obtained in a very different way. They are given via a recurrent relation between polynomials that are a particular case of the so called R\'edei polynomials, see~\cite{Redei}.

\subsection{Some examples}

With the notation of the $R_j(n,x)$, the ten sporadic cases of Theorem~\ref{theo:power2} (in particular, this includes the four classical examples by Machin, Euler, Hermann and Hutton mentioned in the introduction) can be obtained, in the same order as in the theorem, as follows:
\begin{gather*}
4 \arctan(R_0(1, x)) - \arctan(R_3(4, x)) = \pi/4, 
\quad\text{with } x = 1/5,
\\
2 \arctan(R_0(1, x)) - \arctan(R_3(2, x)) = \pi/4, 
\quad\text{with } x = 1/2,
\\
-\tfrac{3}{2} \arctan(R_0(2, x)) + \arctan(R_3(3, x)) = \pi/4, 
\quad\text{with } x = 3,
\\
- \arctan(R_0(3, x)) + 3 \arctan(R_3(1, x)) = \pi/4, 
\quad\text{with } x = 2,
\\
\tfrac{4}{3} \arctan(R_0(1, x)) - \tfrac{1}{3} \arctan(R_1(4, x)) = \pi/4, 
\quad\text{with } x = 2/3,
\\
\tfrac{1}{2} \arctan(R_0(3, x)) - \tfrac{3}{2} \arctan(R_2(1, x)) = \pi/4, 
\quad\text{with } x = 2,
\\
2 \arctan(R_0(1, x)) - \arctan(R_3(2, x)) = \pi/4, 
\quad\text{with } x = 2/5,
\\
 2 \arctan(R_0(1, x)) - \arctan(R_3(2, x)) = \pi/4, 
\quad\text{with } x = 1/3, 
\\
-\tfrac{1}{2} \arctan(R_0(2, x)) + \arctan(R_3(1, x)) = \pi/4, 
\quad\text{with } x = 3,
\\
2 \arctan(R_0(3, x)) - 3 \arctan(R_1(2, x)) = \pi/4, 
\quad\text{with } x = 2,
\end{gather*}
while the two parametric families correspond to 
\begin{gather*}
\arctan \left(R_0\left(1, \frac{1}{2^{a+1}+1}\right)\right)
- \arctan \left(R_3\left(1, \frac{1}{2^{a+1}+1}\right)\right) = \frac{\pi}{4}, 
\\
\arctan \left(R_0\left(1, \frac{1}{2^{a+1}-1}\right)\right)
- \arctan \left(R_3\left(1, \frac{1}{2^{a+1}-1}\right)\right) = \frac{\pi}{4},
\end{gather*}
for $a\in \N^*$. 

It is not difficult to identify many other two-term well-known Machin-like formulas by means of our notation. Let us give some examples, with their corresponding Lehmer measures, denoted by~$\mu$.
The combination
$5\arctan(R_1(2,x)) - 2\arctan(R_0(5,x))$ for $x = 3$ gives the formula 
\begin{equation}
\label{eq:GI}
5 \arctan\left(\frac{1}{7}\right) + 2 \arctan\left(\frac{3}{79}\right) = \frac{\pi}{4},
\qquad \mu \sim 1.88727.
\end{equation}
The combination
$22\arctan(R_2(17,x)) - 17\arctan(R_3(22,x))$ for $x = 1/2$ gives
\begin{equation}
\label{eq:GI2}
22 \arctan\left(\frac{24\,478}{873\,121}\right) 
+ 17 \arctan\left(\frac{685\,601}{69\,049\,993}\right) 
= \frac{\pi}{4},
\qquad \mu \sim 1.14343.
\end{equation}
Finally,
$22\arctan(R_0(1,x)) - \arctan(R_3(22,x))$ for $x = 1/28$ gives 
\[
  22 \arctan\left(\frac{1}{28}\right)
  + \arctan\left(\frac{1\,744\,507\,482\,180\,328\,366\,854\,565\,127}
  {98\,646\,395\,734\,210\,062\,276\,153\,190\,241\,239}\right) = \frac{\pi}{4},
\qquad \mu \sim 0.901429.
\]
Of course, each of the Machin's formulas appearing in this paper can be checked by direct multiplication of its associated Gaussian integers. For instance, \eqref{eq:GI} and \eqref{eq:GI2} hold because
$(7+i)^5(79+3i)^2 = 2^35^{10}(1+i)$ and
\[
  (873\,121 + 24\,478i)^{22} (69\,049\,993+685\,601i)^{17} = 2^8 5^{374}(1+i).
\]

It seems to us that our formulas are likely to reproduce most of the known Machin's type formulas with two terms, as well as to obtain new ones with $N=2$, but are not enough in general to include all formulas with $N>2$, like for instance the ones appearing in~\cite{Gu}. In any case, two term formulas have been also shown to be useful as starting points to produce formulas with more terms, see for instance the procedures developed in~\cite{AbSJQ2, ChaHe, Todd, We}.

Although the first main aim of our paper was to produce Machin-like identities with arbitrarily small Lehmer measure, with the help of Theorem~\ref{theo:machinRj}, it is not difficult to look for examples satisfying this property. It is enough to take $N=2$ and to use a suitable strategy, with the help of any computer algebra system.

We want to get two functions $R_{j}(n,x)$ and $R_{i}(m,x)$ whose absolute values are ``small'' at the same $x$, to guarantee that the corresponding series \eqref{eq:Gregory} converges quickly (that is, ``few'' summands of the series are necessary to get a good precision). This is what happens with the Machin-like identities having small Lehmer measure. With the aid of a computer, we can look for these $x$ with different strategies: 
\begin{itemize}
\item Searching numerically for minima of each function of type $R_j(n,x)^2 + R_i(m,x)^2$ (or similar, since, for example, we can put different weights or exponents on the two summands), and imposing that the value of the resulting function is small enough.
\item By numerically identifying intervals in which, simultaneously, $-\varepsilon < R_j(n,x) < \varepsilon$ and $-\varepsilon < R_i(m,x) < \varepsilon$, for $\varepsilon>0$ fixed beforehand.
\end{itemize}
In both cases, in order to obtain ``nice'' expressions, it is of interest to take $x$ rational with a numerator and denominator that are not too large. This can be achieved by taking convergents of continued fractions of numbers that we have obtained with the previous strategies.

A couple of new Machin-Like identities have been obtained with the above strategy (with their corresponding Lehmer measure~$\mu$). Using $R_j(n,x)$ with big values of $n$ it is easier to find examples with small Lehmer measure, but then we end up with fractions $a_k/b_k$ where both $a_k$ and $b_k$ have many digits. In this case, we denote $f^{r}_{s}$ to indicate an irreducible fraction with $r$ digits in the numerator and $s$ digits in the denominator.
\begin{itemize}
\item The relation $33\arctan(R_0(1, x)) - \arctan(R_3(33, x))$ with $x=1/42$ gives
\[
  33 \arctan(1/42) - \arctan(\ff{50}{54}) = \pi/4, 
  \quad \mu \sim 0.880916.
\]
\item The relation $48\arctan(R_0(1,x)) - \arctan(R_3(48,x))$ with $x=9/550$ gives
\[
  48 \arctan(9/550) - \arctan(\ff{127}{132}) = \pi/4, 
  \quad \mu \sim 0.765513.
\]
\end{itemize}

To evaluate $R_j(n,x)$ for very big values of $n$ (say, for instance, $n > 100$), it is not a good idea to use their rational expressions given in~\eqref{eq:R0}, \eqref{eq:R2}, \eqref{eq:R1} and~\eqref{eq:R3}. 
For instance when $n$ is odd, both the numerator and the denominator are polynomials with $n+1$ non-zero monomials, see~\eqref{eq:R1}. 
From a computational point of view, it is better to proceed as follows. In practice, we have used these methods in some of the examples that appear in the next section.

Because $R_0(n,x) = \tan(n\theta)$ with $x = \tan \theta$
we have 
\[
  R_0(n,x) = \tan(n\theta) = \frac{\sin(n\theta)}{\cos(n\theta)}
  = \frac{\Im\big((\cos\theta + i\sin\theta)^n\big)}
    {\Re\big((\cos\theta + i\sin\theta)^n\big)}.
\]
Dividing both the numerator and the denominator by $\cos^n(\theta)$ we get
\[
  R_0(n,x) 
  = \frac{\Im\big((\cos\theta + i\sin\theta)^n\big)}
    {\Re\big((\cos\theta + i\sin\theta)^n\big)}
  = \frac{\Im\big((1+i\tan\theta)^n\big)}
    {\Re\big((1+i\tan\theta)^n\big)}
  = \frac{\Im\big((1+ix)^n\big)}
    {\Re\big((1+ix)^n\big)}.
\]
If $x \in \Q$ is fixed, we can evaluate $(1+ix)^n$ via successive squaring (this is particularly easy if $n$ is a power of~$2$). 
Thus, $(1+ix)^n$ is a number in $\Q[i]$, and using it we obtain $R_0(n,x)$.

In the same way, using $R_1(n,x) = \tan(n\theta+\pi/4)$ with $x = \tan \theta$, we have
\begin{align*}
  R_1(n,x) &= \frac{\sin(n\theta+\pi/4)}{\cos(n\theta+\pi/4)}
  = \frac{\cos(n\theta)+\sin(n\theta)}{\cos(n\theta)-\sin(n\theta)} \\
  &= \frac{\Re\big((\cos\theta + i\sin\theta)^n\big) 
      + \Im\big((\cos\theta + i\sin\theta)^n\big)}
    {\Re\big((\cos\theta + i\sin\theta)^n\big)
      - \Im\big((\cos\theta + i\sin\theta)^n\big)} \\
  &= \frac{\Re\big((1 + i\tan\theta)^n\big) 
      + \Im\big((1 + i\tan\theta)^n\big)}
    {\Re\big((1 + i\tan\theta)^n\big)
      - \Im\big((1 + i\tan\theta)^n\big)}
  = \frac{\Re\big((1+ix)^n\big) + \Im\big((1+ix)^n\big)}
    {\Re\big((1+ix)^n\big) - \Im\big((1+ix)^n\big)},
\end{align*}
and again we can evaluate $(1+ix)^n$ by means of successive squaring.
Using \eqref{eq:jj'}, we get the corresponding expressions for $R_2(n,x)$ and $R_3(n,x)$.

In the particular case of $n = 2^m$, there is another clever way to evaluate $R_j(2^m,x)$:
using~\eqref{eq:Rjnm} we can write $R_j(2^m,x)$ as a composition of successive $R_0(2,x)$ with a final $R_j(2,x)$; i.e.,
\begin{equation*}
  R_j(2^m,x) = R_j(2,R_0^{\circ(m-1)}(2,x))
\end{equation*}
(to avoid confusion with multiplicative powers, we use $f^{\circ n}$ to denote the composition of the function $f$ with itself $n$ times).
In the above, we have $m$ rational functions (with numerators and denominators of degree $1$ or~$2$) that are easy to evaluate. 
Computer experiments show that, for $n=2^m$, this method is faster than the previous procedure based on computing $(1+ix)^n$ via successive squaring. 

\subsection{Machin-like identities with small Lehmer measure}

With the help of the functions $R_j(n,x)$, we can prove that there exist two-term Machin-like identities with Lehmer measure as small as we want. To do this, we use standard properties of the continued fractions. The formulas with Lehmer measure as small as desired can be given explicitly.

For a real number $x$, let us denote its continued fraction by $x = [c_0,c_1,c_2,c_3,\dots]$, and let $p_k/q_k = [c_0,c_1,c_2,\dots,c_k]$, with $k=0,1,2,\dots$, be its convergents. 
It is well known that
\begin{equation}
\label{eq:cotapkqk}
  \left|x - \frac{p_k}{q_k}\right| \le \frac{1}{q_k^2}.
\end{equation}

\begin{theorem}
\label{theo:mLeps}
For every $\varepsilon > 0$ there exist positive integers $n,b_1,b_2$, and another integer $a_2$ with $0<|a_2|<b_2$, such that the Machin-like identity
\begin{equation}
\label{eq:mLeps}
  n \arctan\frac{1}{b_1} - \arctan\frac{a_2}{b_2} = \frac{\pi}{4}
\end{equation}
has Lehmer measure \eqref{eq:Lehmer-orig} less than~$\varepsilon$.
\end{theorem}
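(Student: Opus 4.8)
The plan is to exploit the Machin machine of Theorem~\ref{theo:machinRj} in its simplest nontrivial form, namely the two-term identity
\[
  n \arctan(R_0(1,x)) - \arctan(R_3(n,x)) = \frac{r}{s}\pi,
\]
which holds for every rational $x$ because the coefficients $r_1=n$, $r_2=-n$ (divided by $n_1=1$, $n_2=n$) satisfy $\sum r_k = 0$. Here $R_0(1,x)=x$, so the first term is $n\arctan(x)$. The idea is that if we pick $x=1/b_1$ with $b_1$ large, the first arctangent is already tiny, while the second term $\arctan(R_3(n,x))$ will also be forced to be small provided $R_3(n,x)$ is close to $0$. Since $R_3(n,x)=\tan(n\arctan(x)-\pi/4)$, this quantity is small exactly when $n\arctan(1/b_1)$ is close to $\pi/4$. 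So the whole strategy reduces to choosing $n$ and $b_1$ so that $n\arctan(1/b_1)\approx \pi/4$ very accurately, which simultaneously makes $b_1$ large (small first summand) and $R_3(n,1/b_1)=a_2/b_2$ small (small second summand); both small summands drive the Lehmer measure \eqref{eq:Lehmer-orig} to zero.

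The mechanism for achieving this approximation is continued fractions, which is why \eqref{eq:cotapkqk} was recalled. First I would fix a large integer $n$ (to be chosen at the end in terms of $\varepsilon$) and consider the target value $t = \tan\big(\tfrac{\pi}{4n}\big)$, so that $n\arctan(t)=\pi/4$ exactly. Then I would approximate $t$ by a convergent $p_k/q_k = [c_0,c_1,\dots,c_k]$ of its continued fraction. Taking $x = 1/b_1$ to be this convergent forces $b_1 = q_k/p_k$; more carefully, since $t$ is small for large $n$, its convergents have the form $1/b_1$ with $b_1$ of size roughly $n$, and \eqref{eq:cotapkqk} gives $|t - 1/b_1|\le 1/b_1^2$. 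Using the mean value theorem on $\arctan$, this yields $|n\arctan(1/b_1) - \pi/4| \le n\cdot|t-1/b_1| \le n/b_1^2$, which is small. Consequently $R_3(n,1/b_1)=\tan\big(n\arctan(1/b_1)-\pi/4\big)$ has absolute value bounded by a constant times $n/b_1^2$, and setting $a_2/b_2 := R_3(n,1/b_1)$ (in lowest terms, a rational by Theorem~\ref{theo:machinRj}) we obtain $0<|a_2/b_2|\ll n/b_1^2$. The residual rational multiple $\tfrac{r}{s}\pi$ equals $\pi/4$ because both summands are close to their ``nominal'' values and the only rational multiple of $\pi$ within reach on that interval is $\pi/4$; this can be confirmed by a direct continuity/monotonicity check on the relevant $x$-interval, giving exactly \eqref{eq:mLeps}.

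For the Lehmer measure I would now estimate each term of \eqref{eq:Lehmer-orig}. The first term contributes $1/\log_{10}(b_1)$, which tends to $0$ as $b_1\to\infty$. For the second term, $|b_2/a_2| = 1/|R_3(n,1/b_1)| \gg b_1^2/n$, so $\log_{10}(|b_2/a_2|) \ge 2\log_{10}(b_1) - \log_{10}(n) - O(1)$, whence its reciprocal also tends to $0$ as $b_1\to\infty$. Thus the total Lehmer measure is bounded by something like $1/\log_{10}(b_1) + 1/(2\log_{10}(b_1) - \log_{10} n)$, and by first fixing $n$ large and then taking a convergent with $b_1$ sufficiently large (equivalently $k$ large enough), both reciprocal logarithms fall below $\varepsilon/2$, giving total measure less than $\varepsilon$.

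The main obstacle I anticipate is not the size estimates, which are routine, but pinning down that the right-hand constant is exactly $\pi/4$ rather than some other rational multiple of $\pi$, and ensuring the lowest-terms fraction $a_2/b_2$ genuinely satisfies $0<|a_2|<b_2$ with $b_2$ growing quadratically in $b_1$. The value $\tfrac{r}{s}\pi$ in \eqref{eq:xmachinRj} is only piecewise constant, jumping at the roots of the denominators of the $R_j$, so one must verify that the chosen $x=1/b_1$ lies in the correct interval where the constant is $\pi/4$; since $n\arctan(1/b_1)$ is within $O(n/b_1^2)$ of $\pi/4$ and the first summand $n\arctan(1/b_1)$ together with the genuinely small second summand $\arctan(a_2/b_2)$ must sum to a rational multiple of $\pi$ lying very near $\pi/4$, the only candidate is $\pi/4$ itself. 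Handling the possibility that $R_3(n,1/b_1)=0$ (a useless degenerate identity) is avoided by choosing the convergent so that $1/b_1 \ne t$ strictly, which holds whenever $t$ is irrational—and $t=\tan(\pi/(4n))$ is indeed irrational for $n\ge 1$ by Niven's theorem, since $\pi/(4n)$ is a rational multiple of $\pi$ whose tangent is rational only for the excluded values.
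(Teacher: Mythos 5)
Your strategy is sound and does reach the theorem, but it is not the paper's route. The paper takes convergents $p_k/q_k$ of $\pi$ itself and sets $n=p_k$, $b_1=4q_k$, $\xi=1/(4q_k)$; then $n\arctan(\xi)=\pi/4+O(1/q_k^2)$, so $|a_2/b_2|=|R_3(n,\xi)|=O(1/q_k^2)$ and the Lehmer measure is $O(1/\log_{10}q_k)=O(1/k)$. You instead fix $n$, set $t=\tan(\pi/(4n))$, and approximate $t$ by a unit fraction. Both arguments rest on the same key identity $n\arctan(x)-\arctan(R_3(n,x))=\pi/4$ near the origin and the same mechanism (a continued-fraction approximation forces $n\arctan(1/b_1)$ close to $\pi/4$, hence $R_3$ small), so the difference is only in the parametrization. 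The paper's choice couples $n$ and $b_1$ through one convergent of $\pi$ and gains a factor: its second fraction is $O(1/q_k^2)$ against $b_1\asymp q_k$, whereas yours is only $O(1/n)$ against $b_1\asymp n$ --- still enough for the theorem, but quantitatively weaker. Your identification of the constant as exactly $\pi/4$ can be made rigorous by noting that the piecewise-constant values of the two-term combination differ by integer multiples of $\pi$ (the jumps of $\arctan(R_3(n,\cdot))$ across poles are $\pm\pi$), so a value within $O(1/n)$ of $\pi/4$ must equal $\pi/4$; the paper instead evaluates at $x=0$ and checks that $\xi$ lies in the interval of validity of $R_3(n,x)=\tan(n\theta-\pi/4)$.

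One step of yours fails as written. You assert that, because $t$ is small, ``its convergents have the form $1/b_1$''; this holds only for the first convergent $1/c_1$, since $[0;c_1,c_2]=c_2/(c_1c_2+1)$ is a unit fraction only when $c_2=1$. Consequently your closing limit --- first fixing $n$ large and then taking a convergent with $b_1$ sufficiently large --- is not available: for fixed $n$ the unit-fraction form \eqref{eq:mLeps} pins $b_1\asymp 4n/\pi$ down, and you cannot send $b_1\to\infty$ independently (a higher convergent $p_k/q_k$ would produce $n\arctan(p_k/q_k)$, which is not of the required shape). The repair is immediate and already implicit in your estimates: take only the first convergent, so $b_1=c_1\asymp 4n/\pi$, $|t-1/b_1|\le 1/b_1^2$, hence $|a_2/b_2|\ll n/b_1^2\ll 1/n$, and let $n\to\infty$; both reciprocal logarithms are then $O(1/\log_{10}n)$ and the measure drops below $\varepsilon$. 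Your Niven-theorem argument correctly rules out the degenerate case $a_2=0$, and $|a_2|<b_2$ is automatic for large $n$ since $|R_3(n,1/b_1)|<1$.
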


\begin{proof}
Let $p_k/q_k$ be the convergents of the continued fraction of~$\pi$. By~\eqref{eq:cotapkqk},
\[
  \left|\pi - \frac{p_k}{q_k}\right| = O\left(\frac{1}{q_k^2}\right),
\]
so
\[
  \left|\frac{\pi}{4p_k} - \frac{1}{4q_k}\right| = O\left(\frac{1}{q_k^3}\right).
\] 
Taking $\xi=1/(4q_k)$, the alternating series \eqref{eq:Gregory} easily gives
\[
  \arctan\left(\frac{1}{4q_k}\right) = \frac{1}{4q_k} + O\left(\frac{1}{q_k^3}\right) 
  = \frac{\pi}{4p_k} + O\left(\frac{1}{q_k^3}\right).
\]
Multiplying by $n=p_k$, we obtain
\begin{equation}
\label{eq:pi/4+O}
  n\arctan(\xi) = p_k \left(\frac{\pi}{4p_k} + O\left(\frac{1}{q_k^3}\right)\right) 
  = \frac{\pi}{4} + O\left(\frac{1}{q_k^2}\right). 
\end{equation}

Note that the derivative of $g(x) = n\arctan(x) - \arctan(R_3(n,x))$ is $0$, so $g$ is piecewise constant. Because $g(0) = n \arctan(0) - \arctan(R_3(n,0)) = 0 - \arctan(-1) = \pi/4$, there exists an interval $\mathcal{I}$ around $0$ where the value the function is $\pi/4$ and on~$\mathcal{I}$,
\begin{equation}
\label{eq:ctepi/4}
n\arctan(x) - \arctan(R_3(n,x)) = \frac{\pi}{4}.
\end{equation}
It suffices to show that $\xi=1/(4q_k)$ belongs to~$\mathcal{I}$, which we do below.

By definition, see~\eqref{eq:defR}, the formula
\[
  R_3(n,x) = \tan\left(n\theta+\frac{3\pi}{4}\right) = \tan\left(n\theta-\frac{\pi}{4}\right),
  \qquad x = \tan\theta,
\]
is valid on an interval for the variable $\theta$ on which $\tan$ is a bijective function. That is, for $-\pi/2 < n\theta-\pi/4 < \pi/2$, or $-\pi/4 < n\theta < 3\pi/4$. Because $\tan\theta \sim \theta$ for small $\theta$, this is the case when $-\pi/5 < nx < 3\pi/5$ and $\theta$ is close to zero. Under this condition, we have
\[
  n\arctan(x) - \arctan(R_3(n,x))
  = n\theta - \arctan\left(\tan\left(n\theta-\frac{\pi}{4}\right)\right)
  = n\theta - \left(n\theta-\frac{\pi}{4}\right) = \frac{\pi}{4},
\]
as desired. Since certainly, $\xi=1/(4q_k)$ satisfies $-\pi/5 < n\xi = p_k/(4q_k) < 3\pi/5$, because $p_k/q_k$ are the convergents of~$\pi$.

Once proved that $\xi=1/(4q_k)$ satisfies~\eqref{eq:ctepi/4}, it follows from \eqref{eq:pi/4+O} and \eqref{eq:ctepi/4} that $|R_3(n,\xi)| = O(1/q_k^2)$.
We then have the Machin-like formula~\eqref{eq:mLeps} with $b_1 = 1/\xi = 4q_k$ and $a_2/b_2 = R_3(n,\xi)$.

Finally,
\[
\frac{1}{\log_{10}(1/\xi)} + \frac{1}{\log_{10}(1/|R_3(n,\xi)|)} 
= O\left(\frac{1}{\log_{10} q_k}\right) \stackrel{(*)}{=} O\left(\frac{1}{k}\right),
\]
so taking $k$ big enough, the thesis follows. The step $(*)$ can be justified as follows: the recurrence relation
\[
q_k = a_{k} q_{k-1}+q_{k-2} \ge q_{k-1}+q_{k-2} 
\]
(with $a_k\ge 1$ being the partial quotients of the continued fraction) gives that $q_k\ge F_k$, where $F_m$ is the $m$-th Fibonacci number. Consequently, $q_k\ge \phi^{k-2}$ with $\phi$ the golden section, so $\log_{10} q_k\gg k$.
\end{proof}

\begin{corollary} 
For every $\varepsilon > 0$ and every $N\ge2$ there exists a Machin-like identity
\begin{equation*}
  \sum_{k=1}^{N} \frac{r_k}{n_k} \arctan\left(\frac{a_k}{b_k}\right) = \frac{\pi}{4}
  \qquad \text{with}\quad \prod_{k=1}^{N} r_k\ne 0, 
\end{equation*}
which has Lehmer measure \eqref{eq:Lehmer-orig} less than~$\varepsilon$.
\end{corollary}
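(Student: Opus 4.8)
The plan is to bootstrap from the two-term result. For $N=2$ the statement is exactly Theorem~\ref{theo:mLeps}, so I fix $N\ge 3$ and start from the identity produced there,
\[
  n\arctan\frac{1}{b_1}-\arctan\frac{a_2}{b_2}=\frac{\pi}{4},
\]
with $1/b_1=\xi=1/(4q_k)$ and $|a_2/b_2|=|R_3(n,\xi)|=O(1/q_k^{2})$, where $q_k$ is the denominator of the $k$-th convergent of~$\pi$. The idea is to break the single term $\arctan\xi$ into $N-1$ arctangents of very small rational arguments, so that together with $-\arctan(a_2/b_2)$ we land on exactly $N$ nonzero summands, while the Lehmer measure remains governed by $1/\log_{10}q_k$. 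This is parity-free: splitting into $N-1$ pieces works for every $N$.

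First I would fix auxiliary rationals $s_1,\dots,s_{N-2}$ much smaller than $\xi$, say of size $O(1/q_k^{3})$ with $\sum_i s_i\ll\xi$, and define remainders recursively by $\rho_0=\xi$ and $\rho_i=(\rho_{i-1}-s_i)/(1+\rho_{i-1}s_i)$. Since all quantities in sight are tiny, every product $\rho_{i-1}s_i$ is far below~$1$, so the subtraction form of the addition formula holds on the principal branch and telescopes to
\[
  \arctan\xi=\sum_{i=1}^{N-2}\arctan(s_i)+\arctan(\rho_{N-2}).
\]
Multiplying by $n$ and inserting this into the two-term identity yields an identity with $N$ summands, the first $N-1$ carrying the nonzero coefficient $n$ and the last carrying $-1$, so that $\prod_k r_k\ne 0$ in the notation of the statement. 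The choice $\sum_i s_i\ll\xi$ keeps $\rho_{N-2}$ positive, bounded away from $0$ and from $1$, and of size comparable to $\xi$; hence all $N$ arguments are nonzero rationals in $(-1,1)$.

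It then remains to estimate the Lehmer measure~\eqref{eq:Lehmer-orig}. Each of the $N$ arguments has absolute value $O(1/q_k)$ (the largest being $\rho_{N-2}\approx\xi$), so each term $1/\log_{10}(|b_k/a_k|)$ is $O(1/\log_{10}q_k)$, and summing the fixed number $N$ of them gives $O(N/\log_{10}q_k)$. As in the proof of Theorem~\ref{theo:mLeps}, the bound $q_k\ge\phi^{k-2}$ forces $\log_{10}q_k\gg k$, so taking $k$ large enough makes the total measure smaller than~$\varepsilon$. The step I expect to be the main (though mild) obstacle is the bookkeeping on $\rho_{N-2}$: one must simultaneously keep it small (to control its Lehmer contribution), nonzero (so that contribution is finite), and with a denominator whose logarithm is comparable to $\log q_k$. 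All three are secured by the single choice $s_i=o(\xi)$, which also guarantees the exactness of the telescoping.
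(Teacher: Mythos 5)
Your proof is correct, and it follows the same overall strategy as the paper -- start from the two-term identity of Theorem~\ref{theo:mLeps} and manufacture the remaining $N-2$ terms by splitting an arctangent -- but the splitting device is genuinely different. The paper applies, $N-2$ times, the fixed parameter-free identity
\[
\arctan(x) = 2\arctan(2x)-\arctan(4x^3+3x),
\]
which splits one term into two while automatically keeping the new arguments rational and of the same order of magnitude as $x$, so no further bookkeeping is needed. You instead telescope the single term $\arctan\xi$ into $N-1$ pieces in one shot via the subtraction form of the addition formula, $\arctan(\rho_{i-1})=\arctan(s_i)+\arctan(\rho_i)$ with $\rho_i=(\rho_{i-1}-s_i)/(1+\rho_{i-1}s_i)$, choosing auxiliary rationals $s_i=O(1/q_k^{3})$. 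This buys you flexibility (you can make the extra terms as small as you like, which if anything sharpens the Lehmer estimate, and you control exactly where the $N$ terms land), at the modest cost of having to check the branch of the addition formula ($\rho_{i-1}s_i>-1$, trivially satisfied here) and the non-degeneracy and size of the remainder $\rho_{N-2}$; your condition $\sum_i s_i\ll\xi$ does secure all of that. Your Lehmer estimate $O(N/\log_{10}q_k)$ together with $q_k\ge\phi^{k-2}$ matches the paper's conclusion. The only cosmetic point worth tightening is that the Lehmer measure is computed from the reduced fractions $a_k/b_k$, so one should say that $|b_k/a_k|=1/|s_i|\gg q_k$ regardless of how $s_i$ is written, which is immediate; otherwise the argument is complete.
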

 
\begin{proof}
Given one of the two terms formulas obtained in Theorem~\ref{theo:mLeps}, with arbitrarily small Lehmer measure, any of its arctangent terms can be split into two new ones by using the well known identity
\[
  \arctan(x) = 2\arctan(2x)-\arctan(4x^3+3x),
\]
which once more can be easily proved by derivation. By applying this procedure $N-2$ times we arrive to the desired result. As we have already commented, other ways to split one arctangent term into several ones are developed in~\cite{AbSJQ2, ChaHe, Todd, We}.
\end{proof}

Note that the above proof is constructive, and we can use the procedure given in the proof to explicitly state Machin-like formulas, see Table~\ref{tab:Machineps}. We used the successive squaring method explained in the previous section to compute the values $a_2/b_2$ that appear in that table. 

\begin{table}
\footnotesize
\newcommand{\vv}{\rule[-4pt]{0pt}{14pt}\ignorespaces}
\centering
\begin{tabular}{ccccc}
\hline 
\vv $k$ & $p_k/q_k$ 
& $a_1/b_1$ & $a_2/b_2$ & Lehmer measure
\\
\hline
\vv $1$ & $22/7$
& $1/28$ & $\ff{28}{32} \sim \np{0.0000176845}$ & $\np{0.901429}$ \\
\vv $2$ & $333/106$
& $1/424$ & $\ff{871}{876} \sim \np{0.0000222611}$ & $\np{0.59555}$ \\
\vv $3$ & $355/113$
& $1/452$ & $\ff{937}{943} \sim \np{1.21473}\cdot10^{-6}$ & $\np{0.545675}$ \\
\vv $4$ & $\np{103993}/\np{33102}$
& $1/\np{132408}$ & $\ff{532634}{532644} \sim \np{1.59405}\cdot10^{-10}$ & $\np{0.297306}$ \\
\vv $5$ & $\np{104348}/\np{33215}$
& $1/\np{132860}$ & $\ff{534606}{534617} \sim \np{-6.80756}\cdot10^{-11}$ & $\np{0.29354}$ \\
\vv $6$ & $\np{208341}/\np{66317}$
& $1/\np{265268}$ & $\ff{1129966}{1129977} \sim \np{3.43096}\cdot10^{-11}$ & $\np{0.279937}$ \\
\vv $7$ & $\np{312689}/\np{99532}$
& $1/\np{398128}$ & $\ff{1751055}{1751066} \sim \np{-5.63418}\cdot10^{-12}$ & $\np{0.267466}$ \\
\vv $8$ & $\np{833719}/\np{265381}$
& $1/\np{1061524}$ & $\ff{5023921}{5023933} \sim \np{2.4112}\cdot10^{-12}$ & $\np{0.252025}$ \\
\vv $9$ & $\np{1146408}/\np{364913}$
& $1/\np{1459652}$ & $\ff{7066733}{7066745} \sim \np{-2.79808}\cdot10^{-13}$ & $\np{0.241887}$ \\
\vv $10$ & $\np{4272943}/\np{1360120}$
& $1/\np{5440480}$ & $\ff{28780982}{28780995} \sim \np{1.09862}\cdot10^{-13}$ & $\np{0.22563}$ \\
\vv $11$ & $\np{5419351}/\np{1725033}$
& $1/\np{6900132}$ & $\ff{37062153}{37062169} \sim \np{-3.75733}\cdot10^{-17}$ & $\np{0.207106}$ \\
\vv $12$ & $\np{80143857}/\np{25510582}$
& $1/\np{102042328}$ & $\ff{641854533}{641854548} \sim \np{1.69914}\cdot10^{-16}$ & $\np{0.188275}$ \\
\vv $13$ & $\np{165707065}/\np{52746197}$
& $1/\np{210984788}$ & $\ff{1379387210}{1379387226} \sim \np{-3.51397}\cdot10^{-17}$ & $\np{0.180906}$ \\
\vv $14$ & $\np{245850922}/\np{78256779}$
& $1/\np{313027116}$ & $\ff{2088646642}{2088646658} \sim \np{2.22166}\cdot10^{-17}$ & $\np{0.177756}$ \\
\vv $15$ & $\np{411557987}/\np{131002976}$
& $1/\np{524011904}$ & $\ff{3588514476}{3588514494} \sim \np{-3.88753}\cdot10^{-19}$ & $\np{0.172125}$ \\
\hline
\end{tabular}
\caption{Machin-like identities
$n\arctan(1/b_1) - \arctan(a_2/b_2) = \frac{\pi}{4}$,
with the notation of the proof of Theorem~\ref{theo:mLeps}. 
The notation $f^{r}_{s}$ is used to indicate an irreducible fraction with $r$ digits in the numerator and $s$ digits in the denominator.}
\label{tab:Machineps}
\end{table}

To conclude this section, let us see another way to obtain two-term Machin-like identities with small Lehmer measure. As shown in the proof of Theorem~\ref{theo:mLeps}, the function $g(x) = n\arctan(x) - \arctan(R_3(n,x))$ is piecewise constant and its value is $\pi/4$ in an interval around~$0$. 

\begin{table}
\footnotesize
\newcommand{\vv}{\rule[-4pt]{0pt}{14pt}\ignorespaces}
\newcommand{\igual}{\vv $=$}
\centering
\begin{tabular}{cccc}
\hline 
\vv $2^m$ & 
$a_1/b_1$ & $a_2/b_2$ & Lehmer measure
\\
\hline
\vv $2^5$ & 
$1/40$ & $\ff{50}{52} \sim \np{0.014436}$ & $\np{1.16751}$ \\
\igual 
& $1/41$ & $\ff{45}{47} \sim \np{-0.00506511}$ & $\np{1.0557}$ \\
\igual 
& $3/122$ & $\ff{65}{67} \sim \np{0.00132854}$ & $\np{0.969041}$ 
\\
\vv $2^6$ & 
$1/81$ & $\ff{111}{113} \sim \np{0.00468519}$ & $\np{0.953294}$ \\
\igual & 
$2/163$ & $\ff{138}{142} \sim \np{-0.000161494}$ & $\np{0.786967}$ \\
\igual 
& $39/\np{3178}$ & $\ff{220}{225} \sim \np{-0.0000379642}$ & $\np{0.749474}$ 
\\
\vv $2^7$ & 
$1/162$ & $\ff{281}{283} \sim \np{0.00471529}$ & $\np{0.88242}$ \\
\igual & 
$1/163$ & $\ff{261}{265} \sim \np{-0.000131942}$ & $\np{0.709799}$ \\
\igual & 
$39/\np{6356}$ & $\ff{482}{487} \sim \np{-8.39746}\cdot10^{-6}$ & $\np{0.649066}$ 
\\
\vv $2^8$ & 
$1/325$ & $\ff{603}{605} \sim \np{0.00229166}$ & $\np{0.776917}$ \\
\igual & 
$1/326$ & $\ff{640}{644} \sim \np{-0.000124553}$ & $\np{0.654001}$ \\
\igual & 
$19/\np{6193}$ & $\ff{927}{933} \sim \np{2.24663}\cdot10^{-6}$ & $\np{0.574947}$ 
\\
\vv $2^9$ & 
$1/651$ & $\ff{1361}{1364} \sim \np{0.00108355}$ & $\np{0.69267}$ \\
\igual & 
$1/652$ & $\ff{1438}{1442} \sim \np{-0.000122706}$ & $\np{0.611015}$ \\
\igual & 
$9/\np{5867}$ & $\ff{1848}{1853} \sim \np{0.0000111404}$ & $\np{0.557238}$ 
\\
\vv $2^{10}$ & 
$1/\np{1303}$ & $\ff{3033}{3036} \sim \np{0.000480424}$ & $\np{0.622385}$ \\
\igual & 
$1/\np{1304}$ & $\ff{3187}{3191} \sim \np{0.000122244}$ & $\np{0.576572}$ \\
\igual & 
$4/\np{5215}$ & $\ff{3803}{3807} \sim \np{0.0000283365}$ & $\np{0.540901}$ 
\\
\vv $2^{20}$ & 
$1/\np{1335088}$ & $\ff{6423057}{6423063} \sim \np{2.52287}\cdot10^{-7}$ & $\np{0.31481}$ \\
\igual & $2/\np{2670177}$ & $\ff{6738709}{6738716} \sim \np{-4.18498}\cdot10^{-8}$ & $\np{0.298784}$ \\
\igual & $7/\np{9345619}$ & $\ff{7151377}{7151387} \sim \np{1.6974}\cdot10^{-10}$ & $\np{0.265604}$ 
\\
\vv $2^{21}$ & 
$1/\np{2670176}$ & $\ff{13477425}{13477432} \sim \np{2.52287}\cdot10^{-7}$ & $\np{0.307163}$ \\
\igual & $1/\np{2670177}$ & $\ff{13161772}{13161779} \sim \np{-4.18497}\cdot10^{-8}$ & $\np{0.291137}$ \\
\igual & $7/\np{18691238}$ & $\ff{15249721}{15249731} \sim \np{1.69851}\cdot10^{-10}$ & $\np{0.25796}$ 
\\
\vv $2^{24}$ & 
$1/\np{21361414}$ & $\ff{122970779}{122970786} \sim \np{3.16846}\cdot10^{-8}$ & $\np{0.269781}$ \\
\igual & $1/\np{21361415}$ & $\ff{120445556}{120445564} \sim \np{-5.08256}\cdot10^{-9}$ & $\np{0.257003}$ \\
\igual & $7/\np{149529904}$ & $\ff{137149169}{137149179} \sim \np{1.69887}\cdot10^{-10}$ & $\np{0.238788}$ 
\\
\vv $2^{25}$ & 
$1/\np{42722829}$ & $\ff{250992010}{250992018} \sim \np{1.3301}\cdot10^{-8}$ & $\np{0.258016}$ \\
\igual & 
$1/\np{42722830}$ & $\ff{256042455}{256042463} \sim \np{-5.08256}\cdot10^{-9}$ & $\np{0.251621}$ \\
\igual & 
$3/\np{128168489}$ & $\ff{267001542}{267001550} \sim \np{1.0453}\cdot10^{-9}$ & $\np{0.242399}$ 
\\
\vv $2^{26}$ & 
$1/\np{85445659}$ & $\ff{522185807}{522185816} \sim \np{4.10922}\cdot10^{-9}$ & $\np{0.245319}$ \\
\igual & 
$2/\np{170891319}$ & $\ff{552488478}{552488488} \sim \np{-4.86669}\cdot10^{-10}$ & $\np{0.233456}$ \\
\igual & 
$9/\np{769010935}$ & $\ff{586223936}{586223947} \sim \np{2.3986}\cdot1010^{-11}$ & $\np{0.220238}$ 
\\
\vv $2^{29}$ & 
$1/\np{683565275}$ & $\ff{4662329259}{4662329268} \sim \np{6.62304}\cdot10^{-10}$ & $\np{0.222134}$ \\
\igual &
$1/\np{683565276}$ & $\ff{4743136384}{4743136393} \sim \np{-4.86669}\cdot10^{-10}$ & $\np{0.220568}$ \\
\igual &
$2/\np{1367130551}$ & $\ff{4904750631}{4904750641} \sim \np{8.78178}\cdot10^{-11}$ & $\np{0.212628}$ 
\\
\vv $2^{30}$ & 
$1/\np{1367130551}$ & $\ff{9647887023}{9647887033} \sim \np{8.78178}\cdot10^{-11}$ & $\np{0.208898}$ \\
\igual & 
$6/\np{8202783307}$ & $\ff{10645034813}{10645034824} \sim \np{-7.92992}\cdot10^{-12}$ & $\np{0.199544}$ \\
\igual & 
$7/\np{9569913858}$ & $\ff{10716918381}{10716918392} \sim \np{5.74833}\cdot10^{-12}$ & $\np{0.198424}$ 
\\
\hline
\end{tabular}
\caption{Machin-like formulas 
$2^m\arctan(a_1/b_1) - \arctan(a_2/b_2) = \frac{\pi}{4}$,
corresponding to 
$2^m\arctan(x) - \arctan(R_3(2^m,x)) = \frac{\pi}{4}$,
with $x$ one of the first convergents of the continued fraction of $\pi/2^{m+2}$. 
The notation $f^{r}_{s}$ is used to indicate an irreducible fraction with $r$ digits in the numerator and $s$ digits in the denominator.}
\label{tab:Machin2m}
\end{table}

Then, for fixed $n$ big enough, we can take $x \in \Q$ near $\frac{1}{n} \frac{\pi}{4}$ by using a convergent of the continued fraction of $\frac{1}{n} \frac{\pi}{4}$, and thus we have $a_1/b_1 = x$ and $a_2/b_2 = R_3(n,x)$. In this way, and because $x$ and $R_3(n,x)$ are small numbers, the Lehmer measure of the corresponding Machin-like formula will be small (but the integers $a_2$ and $b_2$ have a lot of digits). 

We can do this with $n=2^m$ and then use \eqref{eq:Rjnm} with $j=3$ to compute $a_2/b_2 = R_3(2^m,x)$. This method is very fast. For $m \le 30$ and using three convergents for every $\frac{1}{2^m} \frac{\pi}{4}$, we have found the corresponding Machin-line formulas, and computed their Lehmer measures. We summarize a collection of these formulas in Table~\ref{tab:Machin2m}.

The Machin-like formulas corresponding to the first convergents of $2^5$ (i.e., $a_1/b_1 = 1/40$) and $2^{26}$ (i.e., $a_1/b_1 = 1/\np{85445659}$) have been previously found in \cite{AbQ, AbSJQ} by a different method.

\section{Machin's formulas with powers of the golden section}
\label{sec:phi}

Recall that $\phi=(1+{\sqrt{5}})/2$ denotes the golden section. There are some linear combinations of arctangents of powers of the golden section which evaluate to a rational multiple of $\pi$ such as
\begin{align*}
\frac{\pi}{4} & = \frac{1}{3}\arctan(\phi^3) + \frac{1}{3}\arctan(\phi) 
= \frac{1}{5}\arctan(\phi^6)+\frac{2}{5}\arctan(\phi^2), \\
\frac{\pi}{4} & = \frac{1}{7}\arctan(\phi^5)+\frac{3}{7}\arctan(\phi^3)
= -\frac{1}{2}\arctan(\phi^5)+\frac{3}{2}\arctan(\phi).
\end{align*}
The first three of them appear for instance in \cite{Chan, LuSt}. The last one, although can be easily obtained from the first three, does not appear in the above papers.
They are all of the form 
\begin{equation}
\label{eq:phikl}
\frac{\pi}{4} = a \arctan(\phi^\kappa)+b\arctan(\phi^{\ell}),
\end{equation}
for positive integers $\kappa>\ell$ with some rational numbers $a,b$. 
Via the formula $\arctan(x)+\arctan(1/x)=\pi/2$ valid for all positive real numbers $x$, each one of the above formulas gives rise to three additional formulas of the same kind with different $(a,b)$, replacing $(\kappa,\ell)$ by $(\pm \kappa,\pm \ell)$. Via the above identity, we see that formula \eqref{eq:phikl} holds as well with $a=b=1/2$, whenever $\kappa+\ell=0$. So, 
eliminating such trivial solutions, we see that equation~\eqref{eq:phikl} holds in $\kappa,\ell \in \Z$, $|\kappa| \ge |\ell|$, $\kappa+\ell \ne 0$ and $a,b \in \Q$ for the following quadruples:
\[
(a, b, \kappa, \ell) \in
\left\{ 
\begin{matrix}
\left(\frac{1}{3}, \frac{1}{3}, 3,1\right), & \left(1,1,-3,-1\right), 
& \left(-1,1,-3,1\right), & \left(1,-1,3,-1\right), \\[4pt]
\left(\frac{1}{5}, \frac{2}{5}, 6,2\right), & \left(1,2,-6,-2\right),
& \left(\frac{-1}{3}, \frac{2}{3},-6,2\right), & \left(1,-2,6,-2\right), \\[4pt]
\left(\frac{1}{7}, \frac{3}{7}, 5,3\right), & \left(1,3,-5,-3\right), 
& \left(\frac{-1}{5}, \frac{3}{5},-5,3\right), & \left(1,-3,5,-3\right), \\[4pt]
\left(\frac{-1}{2}, \frac{3}{2}, 5,1\right), & \left(\frac{-1}{2}, \frac{3}{2},-5,-1\right), 
& \left(\frac{1}{4}, \frac{3}{4},-5,1\right), & \left(\frac{1}{4}, \frac{3}{4},5,-1\right) 
\end{matrix} 
\right\}.
\]
The equation \eqref{eq:phikl} in positive integers $\kappa,\ell$ was treated in \cite{LuSt}. The main result in \cite{LuSt} claims to have found all solutions of equation~\eqref{eq:phikl} in integers $\kappa,\ell$ with $\kappa+\ell\ne 0$. However,~\cite{LuSt} missed the last row of solutions indicated above corresponding to $(\kappa,\ell)=(5,1)$ and its variants with $(\pm 5,\pm 1)$. In this section, we fill in the oversight from \cite{LuSt} and show that there are no other solutions up to signs except for the above four.

Writing as in \cite{LuSt}, $a=u/w$, $b=v/w$ with coprime integers $u,v,w$ and $w\ge 1$, equation \eqref{eq:phikl} leads to 
\begin{equation}
\label{eq:phiuv}
(1+i\phi^\kappa)^{4u} (1+i\phi^{\ell})^{4v} = (1-i\phi^{\kappa})^{4u}(1-i\phi^{\ell})^{4v}
\end{equation}
(formula (4) in~\cite{LuSt}). The norm of the element $1+i\phi^\kappa$ in the biquadratic field $\K = \Q(i,{\sqrt{5}})$ is $5F_\kappa^2$ or $L_\kappa^2$ according to whether $\kappa$ is odd or even, where $F_\kappa$, $L_\kappa$ are the $\kappa$th Fibonacci and Lucas companion of the Fibonacci numbers, respectively. Since the above number is never a power of $2$ for any positive integer $\kappa$, it follows that 
for every odd prime factor $p$ of the above number, there is a prime ideal $\pi$ in $\mathcal{O}_{\K}$ dividing $p$ such that $\pi$ divides $1+i\phi^\kappa$. Note that $\pi$ does not divide 
$1-i\phi^\kappa$, since otherwise $\pi$ divides $(1+i\phi^\kappa)+(1-i\phi^\kappa)=2$, which is false since $\pi$ divides the odd prime~$p$. The same argument applies to $1+i\phi^{\ell}$. 
Using the Primitive Divisor Theorem for Fibonacci and Lucas numbers, it is argued in \cite{LuSt} that $\kappa\le 12$, so one is left with finding all pairs of positive integers $(\kappa,\ell)$ in the range $1<\ell<\kappa\le 12$. Then in \cite{LuSt} (see formula~(5)) it is said that $\pi$ divides $1+i\phi^\kappa$ and $1-i\phi^{\ell}$ and it is shown that, under this assumption, $(\kappa,\ell) = (6,2)$, $(5,3)$, $(5,1)$. Looking at formula (4) in \cite{LuSt} (or formula \eqref{eq:phiuv} above) however, the assumption that $\pi$ divides $1+i\phi^\kappa$ and $1-i\phi^{\ell}$ implies that $u$ and $v$ \textbf{have the same sign}. In fact the solutions from \cite{LuSt} have $a$ and $b$ with the same sign. Thus, the oversight comes from not having treated the case when $u$ and $v$ \textbf{have opposite signs} in~\cite{LuSt}. In this case, $\pi$ 
divides $1+i\phi^\kappa$ and $1+i\phi^{\ell}$. This is the case missed in~\cite{LuSt}. At any rate, all examples must satisfy that the set of odd prime factors of the two numbers 
\[
N_{\K/\Q}(1+i\phi^\kappa)
\quad \text{and}\quad 
N_{\K/\Q}(1+i\phi^{\ell})
\]
must be the same. One calculates all such numbers for $1\le \ell<\kappa\le 12$ and 
gets the four solutions $(\kappa,\ell) = (3,1)$, $(5,1)$, $(5,3)$, $(6,2)$ and no others.

\section*{Acknowledgements}
The first author is partially supported by
the Minis\-te\-rio de Cien\-cia e Inno\-va\-ci\'{o}n (PID2019-104658GB-I00 grant), 
by the grant Severo Ochoa
and Mar\'ia de Maeztu Program for Centers and Units of Excellence in
R\&D (CEX2020-001084-M)
and also by the Ag\`{e}ncia de Gesti\'{o} d'Ajuts Universitaris i de Recerca (2021 SGR 113 grant). 
The second author worked on this paper during a visit at the Max Planck Institute for Software Science in Saarbr\"ucken, Germany, in Spring of 2022. He thanks the people of this Institute for hospitality and support.
This author was also partially supported by Project 2022-064-NUM-GANDA from the CoEMaSS at Wits.
The third author is partially supported by
the Minis\-te\-rio de Cien\-cia e Inno\-va\-ci\'{o}n (PID2021-124332NB-C22 grant).

The authors want to acknowledge the helpful, constructive and detailed revision of the manuscript by the referees, that has allowed us to correct some errors and to improve the final version of the paper.




\begin{thebibliography}{99}

\bibitem{AbQ}
S. M. Abrarov and B. M. Quine,
An iteration procedure for a two-term Machin-like formula for pi with small Lehmer's measure,
\textit{arXiv}, 2017, \url{https://arxiv.org/abs/1706.08835}.

\bibitem{AbSJQ}
S. M. Abrarov, R. Siddiqui, R. K. Jagpal and B. M. Quine,
Unconditional applicability of Lehmer's measure to the two-term Machin-like formula for~$\pi$,
\textit{The Mathematica Journal} 
\textbf{23} (2021), 23~pp. 

\bibitem{AbSJQ2}
S. M. Abrarov, R. Siddiqui, R. K. Jagpal and B. M. Quine,
A new form of the Machin-like formula for $\pi$ by iteration with increasing integers,
\textit{J. Integer Seq.} 
\textbf{25} (2022), Article 22.4.5.

\bibitem{ABCM}
M. Abrate, S. Barbero, U. Cerruti and N. Murru,
Writing $\pi$ as sum of arctangents with linear recurrent sequences, golden mean and Lucas numbers,
\textit{Int. J. Math. Math. Sci.} 
\textbf{10} (2014), no.~5, 1309--1319.

\bibitem{BeNaVa} 
M. Benito, L. M. Navas and J. L. Varona, 
M\"obius inversion formulas for flows of arithmetic semigroups, 
\textit{J. Number Theory} 
\textbf{128} (2008), no.~2, 390--412. 

\bibitem{Cal1} 
J. S. Calcut, 
Single rational arctangent identities for $\pi$, 
\textit{Pi Mu Epsilon J.} 
\textbf{11} (1999), no.~1, 1--6. 

\bibitem{Cal2} 
J. S. Calcut, 
Gaussian integers and arctangent identities for $\pi$, 
\textit{Amer. Math. Monthly} 
\textbf{116} (2009), no.~6, 515--530. 

\bibitem{ChaHe}
M. Chamberland and E. A. Herman, 
Arctangent formulas and pi, 
\textit{Amer. Math. Monthly} 
\textbf{126} (2019), no.~7, 646--650.

\bibitem{Chan}
H.-C. Chan,
Machin-type formulas expressing $\pi$ in terms of $\phi$,
\textit{Fibonacci Quart.}
\textbf{46/47} (2008/2009), no.~1, 32--37.

\bibitem{ChLi} 
H. Chien-Lih, 
81.22 More Machin-type identities, 
\textit{Math. Gaz.} 
\textbf{81} (1997), no.~490, 120--121.

\bibitem{Chu}
D. Chudnovsky and G. V. Chudnovsky, 
Approximation and complex multiplication according to Ramanujan, 
\textit{Ramanujan revisited (Urbana-Champaign, Ill., 1987)}, 375--472, 
Academic Press, Boston, MA, 1988. 

\bibitem{CiNaVa}
\'O. Ciaurri, L. M. Navas and J. L. Varona, 
A transform involving Chebyshev polynomials and its inversion formula, 
\textit{J. Math. Anal. Appl.} 
\textbf{323} (2006), no.~1, 57--62. 

\bibitem{Cohn} 
J. H. E. Cohn, 
Perfect Pell powers, 
\textit{Glasgow Math. J.} 
\textbf{38} (1996), no.~1, 19--20.

\bibitem{Gu} 
J. Guillera, 
Historia de las f\'ormulas y algoritmos para $\pi$ (in Spanish),
\textit{Gac. R. Soc. Mat. Esp.} 
\textbf{10} (2007), no.~1, 159--178.

\bibitem{Le}
D. H. Lehmer, 
On arccotangent relations for $\pi$, 
\textit{Amer. Math. Monthly} 
\textbf{45} (1938), no.~10, 657--664.

\bibitem{Lu} 
F. Luca, 
On the equation $x^2+ 2^a\cdot 3^b = y^n$, 
\textit{Int. J. Math. Math. Sci.} 
\textbf{29} (2002), no.~4, 239--244. 

\bibitem{LuSt}
F. Luca and P. St\u{a}nic\u{a},
On Machin's formula with powers of the golden section,
\textit{Int. J. Number Theory}
\textbf{5} (2009), no.~6, 973--979.

\bibitem{Plaza}
\'A. Plaza,
Identidades de tipo Machin con $\pi$ y dos arcotangentes (in Spanish),
\textit{Gac. R. Soc. Mat. Esp.} 
\textbf{21} (2018), no.~3, 608.

\bibitem{Redei}
L. R\'edei, 
\"Uber eindeuting umkehrbare Polynome in endlichen K\"orpern (in German), 
\textit{Acta Sci. Math. (Szeged)} 
\textbf{11} (1946-48), no.~1-2, 85--92.

\bibitem{StormerOld}
M. C. St\"ormer,
Solution compl\`ete en nombres entiers $m$, $n$, $x$, $y$ et $k$ de l'\'equation
$m\arctan \frac{1}{x} + n\arctan \frac{1}{y} = \frac{k\pi}{4}$ (in French), 
\textit{Skrift. Vidensk. Christiania I. Math.-naturv. Klasse},
1895, Nr.~11, 21 pages.

\bibitem{Stormer}
M. C. St\"ormer,
Solution compl\`ete en nombres entiers de l'\'equation
$m\arctan \frac{1}{x} + n\arctan \frac{1}{y} = k \frac{\pi}{4}$ (in French), 
\textit{Bulletin de la Soci\'et\'e Math\'ematique de France} 
\textbf{27} (1899), 160--170.

\bibitem{Todd}
J. Todd, 
A problem on arc tangent relations, 
\textit{Amer. Math. Monthly} 
\textbf{56} (1949), no.~8, 517--528.

\bibitem{Tw}
I. Tweddle, 
John Machin and Robert Simson on inverse-tangent series for $\pi$, 
\textit{Arch. Hist. Exact Sci.} 
\textbf{42} (1991), no.~1, 1--14.

\bibitem{We}
M. Wetherfield, 
The enhancement of Machin's formula by Todd's process, 
\textit{Math. Gaz.}
\textbf{80} (1996), no.~488, 333--344.

\end{thebibliography}
\end{document}